\UseRawInputEncoding
\documentclass[a4paper]{article}
\pdfoutput=1
\usepackage[all]{xy}
\usepackage[latin1]{inputenc}        %accents
\usepackage[dvips]{graphics,graphicx}
\usepackage{amsfonts,amssymb,amsmath,color,mathrsfs, amstext}
\usepackage{amsbsy, amsopn, amscd, amsxtra, amsthm,authblk, enumerate}
\usepackage{enumerate,algorithmic,algorithm}
\usepackage{upref}
\usepackage[colorlinks,
            linkcolor=red,
            anchorcolor=red,
            citecolor=red
            ]{hyperref}

\usepackage{geometry}
\usepackage[displaymath]{lineno}
\usepackage{float}
\usepackage{yhmath}
\usepackage[normalem]{ulem}
\usepackage{subfig}

\numberwithin{equation}{section}

\def\cL{\mathcal{L}}

\def\cA{\mathcal{A}}
\def\cR{\mathcal{R}}

\newtheorem{theorem}{Theorem}[section]
\newtheorem{lemma}{Lemma}[section]

\newtheorem{proposition}{Proposition}[section]

\newtheorem{corollary}{Corollary}[section]
\newtheorem{definition}{Definition}[section]

\begin{document}

\title{Quasi-random splitting method for accurate and efficient multiphysics simulation}
\author[a,c,d]{Lei Li\thanks{E-mail: leili2010@sjtu.edu.cn}}
\author[a,b]{Yunxiao Liu\thanks{E-mail: lyx112358@sjtu.edu.cn}}
\author[a]{Chenchen Wan\thanks{E-mail: wanchch7@sjtu.edu.cn}}
\affil[a]{School of Mathematical Sciences, Shanghai Jiao Tong University, Shanghai, 200240, P.R.China.}
\affil[b]{Zhiyuan College, Shanghai Jiao Tong University, Shanghai, 200240, P.R.China.}
\affil[c]{Institute of Natural Sciences, MOE-LSC, Shanghai Jiao Tong University, Shanghai, 200240, P.R.China.}
\affil[d]{Shanghai Artificial Intelligence Laboratory}
\date{}
\maketitle

% \begin{abstract}
% We investigate the convergence of a quasi-random operator splitting method for evolution equations. In contrast to classical random splitting, our scheme uses quasi-random sequences to generate the splitting order. This choice is motivated by the enhanced uniformity of quasi-random sequences compared with pseudo-random ones, which improves the distribution of operator orderings and strengthens error cancellation. An advantage of the quasi-random splitting is that it only needs one run to achieve almost second order convergence, even for multiple number of operators.
% The convergence analysis is nontrivial as the classical way to control global error using the local truncation error and stability does not fit perfectly. In particular, we establish a result about the global cancellation of local errors by analyzing the distribution of the sign sequences carefully. When applied to the Allen--Cahn equation, due to the unboundedness and nonlinearity of the equation, we establish stability and uniform a priori estimates of the solutions in suitable norms, derive rigorous local truncation error bounds, and propagate them to global error estimates. We prove that the proposed quasi-random splitting based on the Lee scheme achieves second-order convergence for both linear problems and Allen--Cahn model. Numerical experiments confirm the theoretical rates.
% \end{abstract}
\begin{abstract}
We propose a quasi-random operator splitting method for evolution equations driven by multiple mechanisms. The method uses a low-discrepancy sequence to generate the ordering of the subflows, while requiring only one application of each subflow per time step. In particular, for a decomposition into \(p\) operators, the classical multi-operator Strang splitting requires essentially \(2p-2\) subflow evaluations per step, whereas the present method uses only \(p\). In contrast to randomized splitting, the quasi-random scheme is deterministic once the underlying sequence is fixed, so its improved accuracy is achieved in a single run rather than through averaging over many independent realizations. To analyze this method, we develop a convergence framework that exploits the discrepancy structure of the induced ordering sequence and translates it into cancellation in the accumulated local errors. For two operators, this yields an essentially second-order global error bound of order \(O(\tau^{2}|\log \tau|)\) for bounded linear problems. We further extend the analysis to the Allen--Cahn equation and present numerical experiments, including bounded linear systems and the Allen--Cahn equation, which confirm the predicted convergence behavior and demonstrate that the proposed method achieves near-Strang accuracy at a substantially lower computational cost.
\end{abstract}

\noindent\textbf{Keywords:} quasi-random splitting; operator splitting; low-discrepancy sequence; bounded linear operators; Allen--Cahn equation; convergence analysis

\section{Introduction}

Operator splitting is a classical and powerful numerical strategy for evolution equations whose dynamics are driven by several distinct mechanisms. 
Its basic idea is to decompose a complicated system into a collection of relatively simpler subproblems, solve these subproblems separately within each time step, and then combine the resulting subflows to approximate the exact evolution. 
This divide-and-conquer viewpoint is particularly attractive for multiphysics systems in which different terms possess markedly different mathematical structures and therefore admit different analytic or numerical treatments. 
Typical examples include reaction--diffusion equations, advection--diffusion equations, phase-field models, and more general coupled systems arising in fluid dynamics, materials science, and computational physics \cite{strang1968construction,iserles2009first,glowinski2017splitting, anderson1998diffuse,yue2004diffuse,jacqmin1999calculation,praetorius2015navier}. 
In an abstract form, we consider
\begin{equation}\label{eq:intro:abstract}
\frac{\mathrm d}{\mathrm dt}u(t)=\sum_{j=1}^p \cA_j\bigl(u(t)\bigr), 
\qquad 
u(0)=u_0,
\end{equation}
where $A_1,\dots,A_p$ are evolution operators, possibly nonlinear. 
If $S_j(t)$ denotes the flow associated with the $j$-th subproblem, then operator splitting approximates the full evolution over one time step by compositions of the subflows $S_j$.

Among the most classical splitting methods are Lie splitting and Strang splitting \cite{strang1968construction,iserles2009first,glowinski2017splitting,mclachlan2002splitting,blanes2008splitting,Blanes_Casas_Murua_2024}. 
Given a time step $\tau>0$, the Lie splitting takes the ordered product
\begin{equation}\label{eq:intro:lie}
u^{n+1}=S_p(\tau)\cdots S_2(\tau)S_1(\tau)u^n,
\end{equation}
which is simple and cheap, but in general only first-order accurate because of the noncommutativity of the operators. 
A standard second-order improvement is provided by the symmetric Strang splitting \cite{strang1968construction},
\begin{equation}\label{eq:intro:strang}
u^{n+1}
=
S_1(\tau/2)\cdots S_{p-1}(\tau/2)S_p(\tau)S_{p-1}(\tau/2)\cdots S_1(\tau/2)u^n .
\end{equation}
For two operators, this symmetric construction is highly efficient, since adjacent half steps can be combined by the semigroup property. 
However, in the multi-operator case the computational cost grows quickly: when $p\ge3$, one essentially needs $2p-2$ subflow evaluations per time step. 
This reveals a basic tension in operator splitting: Lie splitting is cheap but not sufficiently accurate, whereas Strang splitting is more accurate but significantly more expensive in multi-operator settings. 
How to achieve higher global accuracy at a cost close to that of Lie-type schemes has therefore remained a central issue in the study of splitting methods.

From the viewpoint of classical theory, the error structure of deterministic splitting methods is already well understood, especially in the bounded linear setting. The modern analysis of splitting methods is closely related to the Baker--Campbell--Hausdorff expansion, commutator structures, composition conditions, and geometric properties of the underlying flows \cite{iserles2009first,mclachlan2002splitting,hairer2006geometric}. When the split generators are bounded linear operators, local errors can often be expressed explicitly in terms of commutators, while global errors follow from stability and recursive propagation. This theoretical picture makes clear that the ordering of the split operators is not merely an implementation detail: it directly influences the leading error terms and therefore the overall convergence behavior.
% From the viewpoint of classical theory, the error structure of deterministic splitting methods is already well understood, especially for ordinary differential equations. 
% The modern analysis of splitting methods is closely related to the Baker--Campbell--Hausdorff expansion, commutator structures, composition conditions, and geometric properties of the underlying flows \cite{hairer2006geometric,iserles2009first}. 
% In the finite-dimensional ODE setting, local errors can often be expressed explicitly in terms of commutators of the split operators, while global errors follow from stability and recursive propagation. 
% This theoretical picture makes clear that the ordering of the split operators is not merely an implementation detail: it directly influences the leading error terms and therefore the overall convergence behavior. 
% In this sense, improving splitting accuracy without substantially increasing the number of subflow evaluations naturally leads to the question of how the splitting order should be organized.

Motivated by this observation, randomized operator splitting has attracted increasing attention in recent years \cite{li2025convergence, li2025ergodicity, cho2024doubling, eisenmann2024randomized}. 
Instead of using a fixed ordering of the operators at every time step, one randomly permutes the operators and applies a Lie-type product with a different order at each iteration. 
A typical randomized splitting step takes the form
\begin{equation}\label{eq:intro:random}
u^{n+1}=S_{\xi_p}(\tau)\cdots S_{\xi_1}(\tau)u^n,
\end{equation}
where $(\xi_1,\dots,\xi_p)$ is a random permutation of $\{1,\dots,p\}$. 
The main idea is that randomization may create an averaging effect over time, so that some systematic errors associated with a fixed order are partially canceled in the long-time accumulation. A related averaging-through-randomization mechanism also appears in the random batch method literature for interacting particle systems
\cite{jin2020random,jin2021convergence,jin2022,jin2022random}.
Compared with deterministic high-order splittings, randomized methods keep a low per-step cost close to that of Lie splitting, while potentially enjoying improved accuracy in an averaged sense \cite{eisenmann2024randomized}. 
However, this benefit comes with an intrinsic limitation: the random orderings also introduce statistical fluctuations. 
Consequently, although randomized splitting may improve the expected error or the bias, the outcome of a single run is not fully stable, and this lack of reproducibility becomes undesirable in long-time simulations or high-accuracy computations.

At the same time, quasi-random methods, or quasi-Monte Carlo (QMC) methods, have long provided a mature framework for reducing error and fluctuations in numerical integration. 
Their core idea is to replace pseudo-random samples by low-discrepancy sequences, whose distribution is substantially more uniform over the unit interval or high-dimensional cubes \cite{caflisch1998monte, dick2010digital, dick2013high, kuipers2012uniform}. 
The advantage of low-discrepancy sequences does not rely on probabilistic independence, but rather on their deterministic uniformity and covering properties \cite{dick2010digital,dick2013high,kuipers2012uniform}. 
This naturally raises an important question for operator splitting: can one use low-discrepancy sequences to generate the splitting orderings, so that the averaging effect of randomized splitting is retained while the statistical fluctuations are reduced? 
In other words, can one replace ``pure randomness'' by a more uniform deterministic ordering mechanism, and thereby obtain a better balance among cost, accuracy, and stability?

The present work is motivated precisely by this question. 
Our aim is not merely to replace one type of random number by another deterministic sequence, but to exploit the stronger uniformity of quasi-random sequences to improve the long-time distribution of operator orderings and hence strengthen error cancellation. 
This problem is nontrivial from both the algorithmic and theoretical viewpoints. 
On the one hand, the connection between low-discrepancy sequences and splitting order generation is far less direct than in classical numerical integration. 
On the other hand, even if quasi-random orderings empirically improve the accuracy, one still needs a rigorous framework explaining how the uniformity of the ordering sequence interacts with commutator errors, local truncation errors, and global propagation.

% To study this issue in a mathematically meaningful way, we consider both a finite-dimensional ODE prototype and the Allen--Cahn equation. 
% The ODE setting plays the role of a theoretical prototype: in finite dimensions, the operators are bounded and the error mechanism can be analyzed more transparently, which allows us to isolate the essential role of quasi-random orderings in error cancellation. 
To study this issue in a mathematically meaningful way, we consider both a bounded linear operator setting and the Allen--Cahn equation. The bounded linear setting serves as a transparent theoretical prototype: the commutator structure and error propagation can be analyzed explicitly, which allows us to isolate the essential role of quasi-random orderings in error cancellation.
The Allen--Cahn equation, by contrast, provides a representative nonlinear PDE model with a diffusion--reaction structure,
\begin{equation}\label{eq:intro:ac}
\partial_t u=\nu\Delta u-f(u),
\qquad 
f(u)=u^3-u,
\end{equation}
posed on the torus with periodic boundary conditions. 
As a classical phase-field model introduced by Allen and Cahn \cite{anderson1998diffuse,allen1979microscopic,wheeler1992phase,benevs2004geometrical,feng2003numerical}, it has been widely studied in materials science and interface dynamics, and it also serves as a standard benchmark for operator splitting methods \cite{descombes2001convergence,weng2016analysis,li2022stability,li2022stability2,LI20101591}.  
For this equation, splitting naturally separates the linear diffusion part from the nonlinear reaction part, but the rigorous analysis is considerably more delicate than in the bounded linear setting because of the unbounded diffusion operator, the nonlinear reaction term, and the Sobolev-space framework required for stability and error propagation \cite{weng2016analysis,li2022stability,li2022stability2}. 
% For this reason, the bounded-linear and PDE parts of the paper are not independent pieces: rather, they form two levels of the same theoretical chain, from a bounded linear prototype to an infinite-dimensional nonlinear model.
For this reason, the bounded linear analysis and the PDE analysis are not separate components of the paper; rather, they form two levels of a single theoretical chain, running from a bounded linear prototype to an infinite-dimensional nonlinear model.

Despite the substantial progress on deterministic splitting, randomized splitting, and quasi-Monte Carlo theory, several gaps remain in the existing literature. 
First, most splitting analyses focus either on deterministic fixed-order schemes or on genuinely random schemes, whereas systematic study of quasi-random orderings is still very limited. 
% Second, although ODE and PDE splitting theories are both well developed in their own settings, there is still a lack of a unified viewpoint showing how a new ordering mechanism can be understood first in a finite-dimensional prototype and then extended to a nonlinear PDE model. 
Second, although splitting theories for bounded linear evolution problems and nonlinear PDEs are both well developed in their own settings, there is still a lack of a unified viewpoint showing how a new ordering mechanism can be understood first in a bounded linear prototype and then extended to a nonlinear PDE model.
Third, for problems such as the Allen--Cahn equation, the real difficulty is not merely to design a splitting rule, but to connect local truncation expansions, stability estimates, and Sobolev a priori bounds in a way that accommodates the new quasi-random structure. 
These issues motivate the present study.

In this paper, we propose a quasi-random operator splitting method in which the operator orderings are generated by quasi-random sequences through a Fisher--Yates shuffle mechanism \cite{10.1145/364520.364540}. 
The resulting method preserves the low-cost nature of Lie-type splitting, while replacing purely random order generation by a more uniformly distributed deterministic procedure. 
Our analysis develops a general framework that links the low-discrepancy property of the underlying quasi-random sequence to the global error propagation of the splitting method. 
We first establish the mechanism in the bounded linear setting, where the role of the quasi-random ordering can be identified explicitly, and then extend the argument to the Allen--Cahn equation by combining stability estimates, uniform Sobolev bounds, and local truncation error analysis. 
The main theoretical result yields an essentially second-order global error bound of order $O(\tau^2|\log \tau|)$ for both the bounded linear problem and the Allen--Cahn problem, which improves substantially over first-order deterministic Lie splitting and also avoids the statistical fluctuations inherent in randomized schemes. 
Numerical experiments for both bounded linear systems and PDE models further confirm the predicted convergence behavior and demonstrate the practical advantage of the proposed quasi-random splitting strategy.

The rest of the paper is organized as follows. 
In Section~\ref{sec:setup}, we introduce the basic notations and setup for the problems and the method. 
In Section~\ref{section:prelim on the quasi-random}, we collect several preliminary results on the quasi-random sequence and formulate the general framework for the convergence analysis. 
In Section~\ref{sec:ode} we analyze the quasi-random splitting method for the bounded linear problem. 
In Section~\ref{sec:pde} we extend the analysis to the Allen--Cahn equation. 
Section~\ref{sec:numerics} presents numerical experiments, and Section~\ref{sec:conclusion} concludes the paper.

\section{Setup and Notations}\label{sec:setup}

In this section we introduce the abstract framework of the quasi-random splitting method and collect the notation used throughout the paper. 
The concrete bounded-linear and PDE models will be specified later.

\subsection{General setup of the quasi-random splitting method}

Let $X$ be a Banach space, and consider the abstract evolution problem
\begin{equation}\label{eq:setup:abstract}
\frac{\mathrm d}{\mathrm dt}u(t)
=
\sum_{j=1}^{p}\mathcal A_j(u(t)),
\qquad
u(0)=u_0\in X,
\end{equation}
where $\mathcal A_1,\dots,\mathcal A_p$ are given operators on $X$.

We denote by $T(t)$ the exact solution operator associated with \eqref{eq:setup:abstract}, so that
\[
u(t)=T(t)[u_0].
\]
For each $j=1,\dots,p$, we denote by $S_j(t)$ the flow generated by the subproblem
\begin{equation}\label{eq:setup:subproblem}
\frac{\mathrm d}{\mathrm dt}w(t)=\mathcal A_j(w(t)),
\qquad
w(0)=w_0.
\end{equation}
Thus,
\[
w(t)=S_j(t)[w_0].
\]

Let $\mathfrak S_p$ be the set of all permutations of $\{1,\dots,p\}$. 
For a permutation $\pi=(\pi_1,\dots,\pi_p)\in\mathfrak S_p$, we define the corresponding splitting operator by
\begin{equation}\label{eq:setup:split_operator}
S^\pi(\tau)
:=
S_{\pi_p}(\tau)\cdots S_{\pi_2}(\tau)S_{\pi_1}(\tau).
\end{equation}

To define the quasi-random splitting method, let
\[
\sigma_0=(1,2,\dots,p)\in\mathfrak S_p
\]
be the reference ordering, and let $\{q_n\}_{n\ge1}\subset[0,1)$ be a one-dimensional quasi-random sequence.

Given a time step $\tau>0$ and a terminal time $T>0$, we consider the uniform grid
\begin{equation}\label{eq:setup:grid}
t_n=n\tau,
\qquad n=0,1,\dots,N,
\qquad N=\left\lceil\frac{T}{\tau}\right\rceil .
\end{equation}
At each time step $n=0,1,\dots,N-1$, the permutation $\sigma^n\in\mathfrak S_p$ is defined by
\begin{equation}\label{eq:setup:permutation_generation}
\sigma^{n}
=
\Psi\bigl(\sigma_0;\,q_{n(p-1)+1},\dots,q_{n(p-1)+p-1}\bigr),
\end{equation}
where $\Psi:\mathfrak S_p\times [0,1)^{p-1}\to\mathfrak S_p$ denotes one Fisher--Yates shuffle applied to the reference ordering.

The quasi-random splitting scheme is then given by
\begin{equation}\label{eq:setup:scheme}
u_{n+1}=S^{\sigma^n}(\tau)[u_n],
\qquad n=0,1,\dots,N-1,
\end{equation}
with initial value $u_0$. The exact solution satisfies
\begin{equation}\label{eq:setup:exact_recursion}
u(t_{n+1})=T(\tau)[u(t_n)],
\qquad n=0,1,\dots,N-1.
\end{equation}

The procedure is summarized in Algorithm~\ref{Alg:qmc}.

\begin{algorithm}
\caption{The quasi-random splitting method}
\label{Alg:qmc}
\textbf{Require:} Time step $\tau>0$, initial data $u_0$, terminal time $T>0$, number of operators $p$, quasi-random sequence $\{q_k\}_{k\ge1}$.
\begin{enumerate}
\item Set $\sigma_0=(1,2,\dots,p)$.
\item For $n=0:\lceil T/\tau\rceil-1$ do
\begin{enumerate}
\item Generate the permutation
\[
\sigma^n
=
\Psi\bigl(\sigma_0;\,q_{(p-1)n+1},\,q_{(p-1)n+2},\,\dots,\,q_{(p-1)n+p-1}\bigr).
\]
\item Set
\[
u_{n+1}
=
S_{\sigma^n_p}(\tau)\cdots S_{\sigma^n_1}(\tau)u_n.
\]
\end{enumerate}
\item End for
\end{enumerate}
\end{algorithm}

\subsection{Other notations}

When the state space is a function space over a domain $\Omega$, we use the standard $L^p(\Omega)$ and Sobolev spaces $W^{k,p}(\Omega)$.

For $1\le p<\infty$, the $L^p(\Omega)$ norm is defined by
\begin{equation}\label{eq:setup:Lp}
\|u\|_p
=
\left(
\int_{\Omega}|u|^p\,dx
\right)^{1/p}.
\end{equation}
For $p=\infty$, we define
\begin{equation}\label{eq:setup:Linf}
\|u\|_{\infty}
=
\operatorname*{ess\,sup}_{x\in\Omega}|u(x)|.
\end{equation}

A multi-index is a tuple $\alpha=(\alpha_1,\dots,\alpha_d)\in\mathbb N^d$, where $\mathbb N=\{0,1,2,\dots\}$. 
We write
\[
|\alpha|=\alpha_1+\cdots+\alpha_d,
\qquad
D^\alpha u
=
\prod_{i=1}^{d}\left(\frac{\partial}{\partial x_i}\right)^{\alpha_i}u.
\]
For $k\in\mathbb N$ and $1\le p<\infty$, the Sobolev norm is defined by
\begin{equation}\label{eq:setup:Sobolev}
\|u\|_{k,p}
=
\left(
\sum_{|\alpha|\le k}\|D^\alpha u\|_p^p
\right)^{1/p}.
\end{equation}
For $p=\infty$, we use
\begin{equation}\label{eq:setup:Sobolev_infty}
\|u\|_{k,\infty}
=
\max_{|\alpha|\le k}\|D^\alpha u\|_\infty.
\end{equation}
Clearly, $L^p(\Omega)=W^{0,p}(\Omega)$, and
$\|u\|_p=\|u\|_{0,p}$.

% We also introduce the notation
% \begin{equation}\label{eq:setup:Gamma}
% \Gamma_{\alpha,m}
% =
% \begin{cases}
% \Big\{\{\beta_1,\beta_2,\dots,\beta_m\}\,\big|\,\sum_{i=1}^{m}\beta_i=\alpha,\ |\beta_i|\ge1\Big\},
% & m\le |\alpha|,\\[1mm]
% \emptyset,
% & m>|\alpha|.
% \end{cases}
% \end{equation}
% Here $\{\beta_1,\dots,\beta_m\}$ denotes an unordered collection of multi-indices, so it is unchanged under permutation of the elements.

%\section{Preliminary results on the quasi-random sequence}\label{section:prelim on the quasi-random}

\section{The quasi-random sequence and the distribution of induced sign sequence}\label{section:prelim on the quasi-random}

% This section gathers several preliminary results on one-dimensional quasi-random sequences needed for the convergence analysis of the splitting methods. In particular, we recall the discrepancy bound for the radical-inverse sequence, establish a Koksma inequality, and derive a weighted summation estimate for the sign sequence induced by thresholding at $1/2$.
In this section we collect several preliminary results on the one-dimensional quasi-random sequence that will be used in the convergence analysis of the splitting methods. 
We begin with the discrepancy of the radical-inverse sequence, then derive a Koksma estimate, and finally establish a weighted summation bound for the sign sequence induced by thresholding the quasi-random points at $1/2$.

\subsection{Preliminary results on the quasi-random sequence}

We first recall the notion of one-dimensional discrepancy.

\begin{definition}[One-dimensional discrepancy]\label{def:discrepancy}
Let $x_1,\dots,x_N\in[0,1)$ be given. 
The discrepancy of the point set $\{x_n\}_{n=1}^N$ is defined by
\begin{equation}\label{eq:prelim:discrepancy}
D_N^*(x_1,\dots,x_N)
:=
\sup_{A\in[0,1]}
\left|
\frac{S_N(A)}{N}-A
\right|,
\end{equation}
where $S_N(A):=\#\{1\le n\le N:\ x_n<A\}$ is the counting function and given by the number of points smaller than $A$.
\end{definition}

A typical one-dimensional quasi-random sequence is given by the radical-inverse sequence
by the base $R$
\begin{equation}\label{eq:prelim:qmc_sequence}
z_n:=\varphi_R(n),\qquad n=1,2,\dots,
\end{equation}
where $\varphi_R(n)$ is constructed as follows.
Let $R\ge2$ be an integer. 
For each $n\in\mathbb{N}$, write its radix-$R$ expansion as
\begin{equation}\label{eq:prelim:radix_expansion}
n=n_0+n_1R+\cdots+n_{M_n}R^{M_n},
\qquad
M_n:=\lfloor \log_R n\rfloor,
\end{equation}
where $n_j\in\{0,1,\dots,R-1\}$. 
The corresponding radical-inverse function is given by
\begin{equation}\label{eq:prelim:radical_inverse}
\varphi_R(n):=n_0R^{-1}+n_1R^{-2}+\cdots+n_{M_n}R^{-M_n-1}.
\end{equation}

The following discrepancy estimate is the basic quantitative property of this sequence.
\begin{theorem}\label{thm:discrepancy_bound}
Let $\{z_n\}_{n\ge1}$ be the radical-inverse sequence defined as above. 
Then for any $n_0\ge 0$, and $N\ge 2$,
\begin{equation}\label{eq:prelim:discrepancy_bound}
D_N^*(z_{n_0+1},\dots,z_{n_0+N})\le \frac{3R-2}{\log R}\,\frac{\log N}{N}.
\end{equation}
\end{theorem}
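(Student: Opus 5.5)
We prove the bound by a digit-by-digit (base-$R$) decomposition of both the index window and the test interval $[0,A)$. The structural fact we rely on is the following. For any $m\ge0$ and any block of $R^m$ consecutive integers $\{k,k+1,\dots,k+R^m-1\}$, the last $m$ digits $n_0,\dots,n_{m-1}$ run through all $R^m$ combinations exactly once. Hence the values $\varphi_R(n)$ fall exactly once into each elementary interval $[jR^{-m},(j+1)R^{-m})$, $j=0,\dots,R^m-1$. This holds because the first $m$ digits determine which such interval contains $\varphi_R(n)$; the higher digits only add a quantity in $[0,R^{-m})$. Consequently, on such a block the counting function is exact for $A$ a multiple of $R^{-m}$. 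For general $A$ the local discrepancy $|S(A)-R^mA|$ is at most $1$, since only the single point in the elementary interval containing $A$ is uncertain.

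Next, we decompose the window $\{n_0+1,\dots,n_0+N\}$ greedily into blocks of the form above: aligned blocks of length $R^m$ starting at multiples of $R^m$. Set $M=\lfloor \log_R N\rfloor$. A standard argument, analogous to decomposing an interval into dyadic pieces, writes any interval of $N$ consecutive integers as a disjoint union of aligned blocks in which, for each level $m=0,1,\dots,M$, at most $2(R-1)$ blocks of length $R^m$ appear: at most $R-1$ on the left ramp-up and at most $R-1$ on the right ramp-down. Actually, alignment is not even necessary, since by the observation above any $R^m$ consecutive integers work. So we may simply write $N=\sum_{m=0}^{M}a_mR^m$ with base-$R$ digits $a_m\le R-1$ and split the window consecutively into $a_m$ blocks of length $R^m$ for each $m$. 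That gives at most $R-1$ blocks per level, and at most $(R-1)(M+1)$ blocks in total.

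Summing the per-block local discrepancies, each bounded by $1$, gives
\[
|S_N(A)-NA|\le (R-1)(M+1)\le (R-1)\Bigl(\frac{\log N}{\log R}+1\Bigr).
\]
It remains to absorb the $+1$ into the stated constant. For $N\ge2$ we have $\log N\ge\log 2$, so
\[
1\le \frac{\log N}{\log 2}\le \frac{\log N}{\log R}\cdot\frac{\log R}{\log 2}.
\]
We need $(R-1)(1+\log R/\log 2)\le 3R-2$ after dividing through by $\log N/\log R$. For moderate $R$ this holds; for large $R$, where $\log_2R$ grows, it fails. In that case we handle $1$ more carefully using $M+1\le \log N/\log R+1$ together with a sharper estimate. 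If $N<R$ then $M=0$, and the trivial bound $|S_N(A)-NA|\le N$ can be compared directly, using $N\le (3R-2)\log N/\log R$ for $2\le N<R$ (true since $\log N/\log R\ge \log 2/\log R$, and a short check suffices). If $N\ge R$, then $M\ge1$, so $M+1\le 2M\le 2\log N/\log R$, giving
\[
|S_N(A)-NA|\le 2(R-1)\frac{\log N}{\log R}\le (3R-2)\frac{\log N}{\log R}.
\]
Dividing by $N$ and taking the supremum over $A$ yields the claim. The main point to verify carefully is the exact-uniformity property for arbitrary (non-aligned) blocks of $R^m$ consecutive integers. It holds because the residues $n \bmod R^m$ over such a block form a complete residue system, and the first $m$ digits of $n$ are exactly $n\bmod R^m$. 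The small-$N$ case $N<R$ needs the separate elementary check indicated above.
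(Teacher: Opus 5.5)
Your proof is correct. The paper gives no proof of this theorem: it cites Halton and says that passing to an arbitrary starting index $n_0$ is straightforward.

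Your argument is the one-dimensional form of the classical block-counting argument behind Halton's bound. Its key observation is that any $R^m$ consecutive integers form a complete residue system modulo $R^m$, so $\varphi_R$ places exactly one of them in each interval $[jR^{-m},(j+1)R^{-m})$. This is exactly what makes the bound independent of $n_0$, so you supply the justification the paper leaves implicit. For the same reason, splitting the window by the base-$R$ digits of $N$, rather than into aligned pieces, is legitimate. It even gives the sharper constant $2(R-1)/\log R$ when $N\ge R$.

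Two small fixes.

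First, delete the abandoned aside about $(R-1)(1+\log R/\log 2)\le 3R-2$. The case split that follows does all the work.

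Second, in the case $2\le N<R$, the reason you give, $\log N/\log R\ge\log 2/\log R$, is not enough by itself. It would require $N\le(3R-2)\log 2/\log R$, which fails for example at $R=100$, $N=99$. The needed inequality $N\log R\le(3R-2)\log N$ is still true. Write $N\log R=N\log N+N\log(R/N)$. Then $N\log N\le(R-1)\log N$, and $N\log(R/N)\le R/e\le(2R-1)\log 2\le(2R-1)\log N$. Adding these gives the inequality.
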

Note that the upper bound is independent of the starting point $z_{n_0+1}$ as well.
The proof can be found in \cite{halton1960efficiency} and the modification 
to starting point $z_{n_0+1}$ is straightforward.

One has the following Koksma inequality.
\begin{theorem}[Koksma inequality]\label{thm:koksma}
Let $f:[0,1]\to\mathbb{R}$ be of bounded variation, and let $x_1,\dots,x_N\in[0,1]$ be given. Then
\begin{equation}\label{eq:prelim:koksma}
\left|
\frac{1}{N}\sum_{n=1}^{N}f(x_n)-\int_0^1 f(t)\,\mathrm dt
\right|
\le
V(f)\,D_N^*(x_1,\dots,x_N),
\end{equation}
where $V(f)$ denotes the total variation of $f$ on $[0,1]$.
\end{theorem}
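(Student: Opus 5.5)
The plan is to follow the classical route: write the empirical average minus the integral as a single Stieltjes integral against the discrepancy function, then integrate by parts. We may assume, after discarding the definition of $f$ on the null set where it matters least, that $f$ is right-continuous or at least that $f$ has at most countably many discontinuities; it is cleaner to argue directly with a summation-by-parts argument that avoids any regularity beyond bounded variation.

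First, sort the points: let $0\le x_{(1)}\le\cdots\le x_{(N)}\le 1$, and set $x_{(0)}:=0$, $x_{(N+1)}:=1$. Abel summation gives
\[
\frac1N\sum_{n=1}^N f(x_{(n)}) - \int_0^1 f(t)\,\mathrm dt
=\sum_{k=0}^{N}\int_{x_{(k)}}^{x_{(k+1)}}\Bigl(\frac kN - t\Bigr)\,\mathrm df(t)
\]
up to sign conventions. One can derive this either by splitting $\int_0^1 f\,\mathrm dt=\sum_k\int_{x_{(k)}}^{x_{(k+1)}}f$ and integrating by parts on each subinterval, or by comparing $f(x_{(n)})$ to $f(t)$ through $f(x_{(n)})-f(t)=\int_t^{x_{(n)}}\mathrm df$. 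On $(x_{(k)},x_{(k+1)})$ we have $S_N(t)=k$ for $t$ in the open interval, up to ties, so the integrand is exactly $S_N(t)/N-t$. Its absolute value is bounded by $D_N^*$ by Definition~\ref{def:discrepancy}.

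Summing, the total is at most $D_N^*\int_0^1|\mathrm df|=V(f)D_N^*$, which is the bound claimed in Theorem~\ref{thm:koksma}.

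The main obstacle is rigor at the jump points of $f$ and at repeated or endpoint values $x_{(k)}$, where the Stieltjes integral must assign point masses consistently with the left- or right-limits used in $S_N$. The counting function uses strict inequality $x_n<A$, so $S_N$ is left-continuous.

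To handle this I would avoid Stieltjes integrals altogether. Instead, I would approximate on each subinterval: for $t\in[x_{(k)},x_{(k+1)}]$ bound
\[
\Bigl|\tfrac1N f(x_{(k)})\cdot(\text{weights})-\cdots\Bigr|
\]
by writing $\int_0^1 f = \sum_k (x_{(k+1)}-x_{(k)})\,\overline{f}_k$ with $\overline f_k$ an average of $f$ on the subinterval. Abel-summing then gives $\sum_{k}(k/N - \xi_k)\bigl(f(\cdot)-f(\cdot)\bigr)$-type telescoped differences over a partition of $[0,1]$. Here each $\xi_k$ lies between consecutive points, so $|k/N-\xi_k|\le D_N^*$ by choosing $A$ slightly above or below points and taking limits in the supremum. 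The sum of absolute increments over a partition is bounded by $V(f)$. The careful bookkeeping of which coefficient $k/N-\xi$ multiplies which increment is the only delicate part.
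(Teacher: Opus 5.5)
Your argument is correct and is essentially the paper's proof. You sort the points, integrate by parts on each $[x_{(k)},x_{(k+1)}]$ to obtain $\sum_k\int (t-k/N)\,\mathrm df(t)$ (the sign is immaterial after taking absolute values), and bound $|t-k/N|\le D_N^*$ there. The paper gets that bound from the characterization \eqref{eq:prelim:discrepancy_formula}; you get it directly from Definition~\ref{def:discrepancy}, using that $S_N\equiv k$ on the open subinterval and passing to limits at the endpoints. Summing the variations then finishes the argument. The fallback you sketch for jumps is unnecessary: $t\mapsto t-k/N$ is continuous on each closed subinterval, so the Riemann--Stieltjes integrals exist and integration by parts is exact for every $f$ of bounded variation, jumps included. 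Your opening aside about modifying $f$ on a null set would not be legitimate, since the sum depends on the point values $f(x_n)$, but you never actually use it.
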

The proof can be found in \cite{kuipers2012uniform}. Here, we attach a brief sketch of the proof for the completeness.
Without loss of generality, reorder the points so that
\[
0\le x_1\le x_2\le\cdots\le x_N\le1.
\]
By the integration by parts formula, one has
\begin{equation*}
\left| \frac{1}{N}\sum_{n=1}^{N}f(x_n)-\int_0^1 f(t)\,\mathrm dt
\right| =
\left|
\sum_{n=0}^{N}\int_{x_n}^{x_{n+1}}
\left(t-\frac{n}{N}\right)\,\mathrm df(t)
\right|
\le
\sum_{n=0}^{N}\int_{x_n}^{x_{n+1}}
\left|t-\frac{n}{N}\right|\,|\mathrm df(t)|
\end{equation*}

recall the standard characterization of the one-dimensional discrepancy
\begin{equation}\label{eq:prelim:discrepancy_formula}
D_N^*(x_1,\dots,x_N)
= \max_{1\le i\le N}
\max\left\{
\left|x_i-\frac{i}{N}\right|,
\left|x_i-\frac{i-1}{N}\right|
\right\}.
\end{equation}
so that for every $t\in[x_n,x_{n+1}]$,
$\left|t-\frac{n}{N}\right| \le D_N^*(x_1,\dots,x_N)$, $n=0,1,\dots,N.$
Hence, the inequality follows.

\subsection{The induced sign sequence and its distribution}

We now introduce the sign sequence obtained by thresholding the quasi-random sequence at $1/2$.

\begin{definition}[Sign sequence]\label{def:sign_sequence}
Let $\{z_n\}_{n\ge1}$ be the quasi-random sequence defined in \eqref{eq:prelim:qmc_sequence}. 
We define the sign sequence $\{\xi_n\}_{n\ge1}$ by $\xi_n=1$ if $z_n\ge 1/2$ while $\xi_n=-1$ if $z_n<1/2$. Then, define the partial sums
\begin{equation}\label{eq:prelim:partial_sums}
S_n:=\sum_{k=1}^{n}\xi_k,
\qquad n\ge1,
\end{equation}
with the convention $S_0:=0$.
\end{definition}

With the above result, it is straightforward to obtain the bound for partial sums.
\begin{lemma}\label{lmm:partial_sum_bound}
One has
\begin{equation}\label{eq:prelim:partial_sum_bound}
|S_N|\le \frac{2(3R-2)}{\log R} \log N,
\qquad N\ge2.
\end{equation}
\end{lemma}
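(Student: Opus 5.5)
The plan is to write $S_N$ in terms of the counting function at $A=1/2$ and then apply Theorem~\ref{thm:discrepancy_bound} with $n_0=0$. By Definition~\ref{def:sign_sequence}, $\xi_k=-1$ exactly when $z_k<1/2$. With $S_N(1/2)=\#\{1\le k\le N: z_k<1/2\}$ the counting function of Definition~\ref{def:discrepancy}, the number of indices with $\xi_k=+1$ is $N-S_N(1/2)$. Hence
\[
S_N=\bigl(N-S_N(1/2)\bigr)-S_N(1/2)=N-2S_N(1/2)=-2N\left(\frac{S_N(1/2)}{N}-\frac12\right).
\]
The counting function here carries its argument explicitly. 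It should not be confused with the partial sum $S_N$ of \eqref{eq:prelim:partial_sums}.

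Next, take $A=1/2$ in the supremum defining $D_N^*$. This gives
\[
|S_N|\le 2N\,D_N^*(z_1,\dots,z_N),
\]
and Theorem~\ref{thm:discrepancy_bound} with $n_0=0$ and $N\ge2$ bounds the right-hand side by $2N\cdot\frac{3R-2}{\log R}\frac{\log N}{N}=\frac{2(3R-2)}{\log R}\log N$. This is exactly \eqref{eq:prelim:partial_sum_bound}.

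One could instead apply the Koksma inequality (Theorem~\ref{thm:koksma}) to the step function $f=\mathbf 1_{[1/2,1)}-\mathbf 1_{[0,1/2)}$. It has $\int_0^1 f=0$ and $V(f)=2$, so it gives the same constant. The direct counting argument is simpler.

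There is no serious obstacle; the bound follows immediately from the discrepancy estimate. The only thing to check is the strict versus non-strict inequality at $1/2$: the sign convention ($\xi_n=-1$ iff $z_n<1/2$) matches the strict inequality $x_n<A$ in the counting function. Since the discrepancy bound in Theorem~\ref{thm:discrepancy_bound} holds for every starting index $n_0$, the same argument also gives $|S_{n_0+N}-S_{n_0}|\le\frac{2(3R-2)}{\log R}\log N$ for every block of $N$ consecutive terms. This block version is what the later weighted summation estimates will need.
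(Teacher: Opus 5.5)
Your proof is correct and follows the paper's route. Both arguments reduce the claim to $|S_N|\le 2N\,D_N^*(z_1,\dots,z_N)$ and then apply Theorem~\ref{thm:discrepancy_bound} with $n_0=0$; the paper gets that reduction from the Koksma inequality (Theorem~\ref{thm:koksma}) applied to the step function $g$ with $V(g)=2$, whereas you read it off directly by evaluating the discrepancy at $A=1/2$, which is slightly more elementary and gives the same constant.
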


\begin{proof}
Consider the step function
\begin{equation}\label{eq:prelim:g_definition}
g(x)=
\begin{cases}
+1, & x\ge \dfrac12,\\[1mm]
-1, & x<\dfrac12.
\end{cases}
\end{equation}
Then, by Definition~\ref{def:sign_sequence},
$S_N=\sum_{k=1}^{N} g(z_k)$.
Theorem~\ref{thm:koksma} yields
\[
\frac{|S_N|}{N}
=
\left|
\frac{1}{N}\sum_{k=1}^{N} g(z_k)-\int_0^1 g(x)\,\mathrm dx
\right|
\le
2\,D_N^*(z_1,\dots,z_N).
\]
Applying the discrepancy estimate in Theorem~\ref{thm:discrepancy_bound}, we obtain
\[
|S_N|
\le
2N\,D_N^*(z_1,\dots,z_N)
\le
\frac{2(3R-2)}{\log R}\log N,
\]
which completes the proof.
\end{proof}

Apply Lemma~\ref{lmm:partial_sum_bound} to each $n\in\{1,\dots,N\}$ and note that $S_0=0$.
One can obtain the following logarithmic bound for the partial sums uniformly.
\begin{corollary}\label{cor:partial_sum_sup}
For every $N\ge2$,
\begin{equation}\label{eq:prelim:partial_sum_sup}
\max_{0\le n\le N}|S_n|
\le \frac{2(3R-2)}{\log R} \log N.
\end{equation}
\end{corollary}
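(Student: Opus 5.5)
The plan is to reduce the uniform bound to Lemma~\ref{lmm:partial_sum_bound} applied at each intermediate index, using that $\log$ is increasing so that every individual bound is dominated by the one at the final index $N$. I would split the range $0\le n\le N$ into three cases: $n=0$, $n=1$, and $2\le n\le N$.

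For $n=0$ we have $S_0=0$ by convention, and the right-hand side of \eqref{eq:prelim:partial_sum_sup} is nonnegative when $N\ge2$. For $2\le n\le N$, Lemma~\ref{lmm:partial_sum_bound} gives
\[
|S_n|\le \frac{2(3R-2)}{\log R}\log n\le \frac{2(3R-2)}{\log R}\log N .
\]
The case $n=1$ is not covered by the lemma, which requires $N\ge2$, and I expect this to be the only obstacle. It is a small one. Since $|S_1|=|\xi_1|=1$, it suffices to check that
\[
\frac{2(3R-2)}{\log R}\log 2\ge 1
\]
for every integer $R\ge2$. Because $\log 2\le\log R$ when $R\ge 2$ would point the wrong way, I would instead argue directly: the map $R\mapsto (3R-2)/\log R$ is increasing for $R\ge2$. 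At $R=2$ the left-hand side equals $2\cdot 4\cdot\log2/\log2=8$, so it is at least $8\ge1$ for all $R\ge2$.

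As a cleaner alternative, one can use the fact that $z_1=\varphi_R(1)=1/R\le 1/2$, so $\xi_1=-1$ and $|S_1|=1$ exactly, and the numerical check above still applies. Taking the maximum over the three cases yields \eqref{eq:prelim:partial_sum_sup}.
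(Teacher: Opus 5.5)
Your proof is correct and follows the paper's argument: apply Lemma~\ref{lmm:partial_sum_bound} at each index, use that $\log$ is increasing, and note $S_0=0$. Your separate treatment of $n=1$ also closes a small gap the paper passes over, since the lemma as stated needs $N\ge2$. One harmless slip in your aside: for $R=2$ you have $z_1=1/2$, so $\xi_1=+1$ rather than $-1$, but only $|S_1|=1$ is used.
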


The next theorem is the estimate that will be used directly in the error analysis.

\begin{theorem}\label{thm:weighted_sum}
Let $f\in \mathrm{Lip}([0,T])\cap L^\infty(0,T)$, $\tau=T/N$ be the time step and $t_k=k\tau$. Let $\{\xi_k\}_{k\ge1}$ be the sign sequence defined in Definition~\ref{def:sign_sequence}. 
Then, it holds that
\begin{equation}\label{eq:prelim:weighted_sum}
\left|
\frac{1}{N}\sum_{k=1}^{N} f(t_k)\xi_k
\right|
\le
C_3\,\tau\log N,
\end{equation}
where $C_3$ can be taken as
\begin{equation}\label{eq:prelim:C3}
C_3 = \frac{2(3R-2)}{\log R} \left(
\mathrm{Lip}(f)+\frac{\|f\|_\infty}{T}
\right).
\end{equation}
\end{theorem}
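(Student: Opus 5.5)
The natural route is summation by parts (Abel summation) against the partial sums $S_k$ of Definition~\ref{def:sign_sequence}, followed by the uniform logarithmic bound of Corollary~\ref{cor:partial_sum_sup}. Writing $\xi_k=S_k-S_{k-1}$ with $S_0=0$, we will rearrange
\[
\sum_{k=1}^{N} f(t_k)\xi_k
= f(t_N)S_N-\sum_{k=1}^{N-1}\bigl(f(t_{k+1})-f(t_k)\bigr)S_k .
\]
This isolates the oscillation of the signs into the partial sums $S_k$. These are only of size $\log N$, whereas the increments of $f$ carry a factor $\tau$.

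We then bound the two terms separately. For the boundary term we use $|f(t_N)S_N|\le \|f\|_\infty \max_{0\le n\le N}|S_n|$. For the sum we use the Lipschitz property, $|f(t_{k+1})-f(t_k)|\le \mathrm{Lip}(f)\,\tau$. Summing over at most $N-1$ indices then gives at most $\mathrm{Lip}(f)\,N\tau\max_n|S_n| = \mathrm{Lip}(f)\,T\max_n|S_n|$. Combining these, and writing $C_R=2(3R-2)/\log R$ for the constant in Corollary~\ref{cor:partial_sum_sup} (valid for $N\ge2$), we get
\[
\Bigl|\sum_{k=1}^{N} f(t_k)\xi_k\Bigr|\le C_R\bigl(\|f\|_\infty+T\,\mathrm{Lip}(f)\bigr)\log N .
\]

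Finally we divide by $N$ and use $1/N=\tau/T$. This yields
\[
\Bigl|\frac1N\sum_{k=1}^N f(t_k)\xi_k\Bigr|\le C_R\Bigl(\mathrm{Lip}(f)+\frac{\|f\|_\infty}{T}\Bigr)\tau\log N,
\]
which is exactly \eqref{eq:prelim:weighted_sum} with $C_3$ as in \eqref{eq:prelim:C3}. The case $N=1$ has to be handled separately, since Corollary~\ref{cor:partial_sum_sup} requires $N\ge2$ and the right-hand side of \eqref{eq:prelim:weighted_sum} vanishes when $\log N=0$. The intended statement is for $N\ge2$, and I would say so explicitly.

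No step is a genuine obstacle. The only care needed is in the indexing of the Abel summation, so that the boundary term involves $S_N$ rather than a shifted quantity. One should also check that the uniform bound on $\max_{n\le N}|S_n|$ (rather than the bound on $S_N$ alone) is what controls every $S_k$ appearing in the sum.
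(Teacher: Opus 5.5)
Your proposal is correct and matches the paper's proof step for step: the same Abel summation with boundary term $f(t_N)S_N$, the uniform bound of Corollary~\ref{cor:partial_sum_sup} on every $S_k$, the estimate $(N-1)\mathrm{Lip}(f)\tau\le T\,\mathrm{Lip}(f)$, and division by $N$ using $1/N=\tau/T$. You are also right that the estimate needs $N\ge2$ (for $N=1$ the right-hand side is $0$ while the left-hand side is $|f(T)|$); the paper leaves this restriction implicit.
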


\begin{proof}
By Abel's summation formula,
\begin{align}
\sum_{k=1}^{N} f(t_k)\xi_k
&=
f(t_N)S_N-\sum_{k=1}^{N-1} S_k\bigl(f(t_{k+1})-f(t_k)\bigr).
\label{eq:prelim:abel}
\end{align}
Taking absolute values and using Corollary~\ref{cor:partial_sum_sup}, we obtain
\begin{align}
\left|
\sum_{k=1}^{N} f(t_k)\xi_k
\right|
&\le
|f(t_N)|\,|S_N|
+
\sum_{k=1}^{N-1}|S_k|\,|f(t_{k+1})-f(t_k)|\notag\\
&\le
\frac{2(3R-2)}{\log R} \log N
\left(
\|f\|_\infty
+
\sum_{k=1}^{N-1}|f(t_{k+1})-f(t_k)|
\right).
\label{eq:prelim:weighted_sum_step1}
\end{align}
Since $f$ is Lipschitz continuous on $[0,T]$,
\[
|f(t_{k+1})-f(t_k)|
\le
\mathrm{Lip}(f)\,|t_{k+1}-t_k|
=
\mathrm{Lip}(f)\,\tau.
\]
Hence
\begin{equation}\label{eq:prelim:variation_on_grid}
\sum_{k=1}^{N-1}|f(t_{k+1})-f(t_k)|
\le
(N-1)\mathrm{Lip}(f)\tau
\le
T\,\mathrm{Lip}(f).
\end{equation}
Substituting \eqref{eq:prelim:variation_on_grid} into \eqref{eq:prelim:weighted_sum_step1} gives
\[
\left|
\sum_{k=1}^{N} f(t_k)\xi_k
\right|
\le \frac{2(3R-2)}{\log R}\bigl(\|f\|_\infty+T\,\mathrm{Lip}(f)\bigr)\log N.
\]
Dividing by $N$ and using $\tau=T/N$, one concludes the claim.

\end{proof}

In fact, one can extract more information about the distribution of the sign sequence from $S_N$.
% \begin{theorem}
% Let $\nu$ be the signed counting measure for $\{\xi_n\}$ on $[0, T]$, i.e.,
% \[
% \nu=\sum_{n=1}^N \xi_n \delta_{t_n}.
% \]
% Then, one can decompose $\nu$ as following
% \[
% \nu=N_1(\mu_+-\mu_-)+\nu_{r},
% \]
% such that the following hold for a constant $C$ depending on $R$ and $T$ only \tcr{check whether there is $T$ dependence}.
% \begin{enumerate}[(i)]
% \item $\mu_{\pm}$ are two probability measures and $\nu_r$ is a signed counting measure;

% \item The total variation of $\nu_r$ satisfies
% \begin{gather}
% \|\nu_r\|_{TV}\le C\log N
% \end{gather}
% \item For any $p\ge 1$, the Wasserstein distance between $\mu_+$ and $\nu_-$ satisfies
% \begin{equation}
% W_p(\mu_+, \mu_-)\le C\log N \tau.
% \end{equation}
% \end{enumerate}
% \end{theorem}

% \tcr{fill in the proofs}

\begin{theorem}\label{thm:measure_decomposition}
Let
\[
\nu:=\sum_{n=1}^{N}\xi_n\delta_{t_n},
\qquad t_n=n\tau,\qquad \tau=\frac{T}{N},
\]
be the signed counting measure associated with the sign sequence $\{\xi_n\}_{n=1}^N$.
Define
\[
I_+:=\{n\in\{1,\dots,N\}:\xi_n=+1\},
\qquad
I_-:=\{n\in\{1,\dots,N\}:\xi_n=-1\},
\]
and let $N_+:=\# I_+$, $N_-:=\# I_-$, $M:=\min\{N_+,N_-\}$.
Then one can decompose $\nu$ as
\[
\nu=M(\mu^+-\mu^-)+\nu_r,
\]
where $\mu^\pm$ are probability measures on $[0,T]$ and $\nu_r$ is a signed counting
measure. Moreover, there exists a constant $C>0$, depending only on $R$, such that

\begin{equation}\label{eq:measure_decomp_tv}
\|\nu_r\|_{TV}\le C\log N,
\end{equation}
where $\|\cdot \|_{TV}$ is total variation and, for every $p\ge1$,
\begin{equation}\label{eq:measure_decomp_wp}
W_p(\mu^+,\mu^-)\le C\tau\log N.
\end{equation}
\end{theorem}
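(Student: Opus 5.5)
The plan is to build the decomposition by an explicit order-preserving matching of the $+1$ and $-1$ positions. The discrepancy control from Corollary~\ref{cor:partial_sum_sup} then limits both the number of unmatched points and the distance between matched points.

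\textbf{Construction.} List the elements of $I_+$ in increasing order as $a_1<a_2<\dots<a_{N_+}$ and those of $I_-$ as $b_1<\dots<b_{N_-}$. Define
\[
\mu^+:=\frac1M\sum_{i=1}^M\delta_{t_{a_i}},\qquad \mu^-:=\frac1M\sum_{i=1}^M\delta_{t_{b_i}},
\]
and let $\nu_r:=\nu-M(\mu^+-\mu^-)$. This remainder is the signed counting measure carried by the unmatched indices $a_{M+1},\dots,a_{N_+}$ (with sign $+$) or $b_{M+1},\dots,b_{N_-}$ (with sign $-$). Hence
\[
\|\nu_r\|_{TV}=|N_+-N_-|=|S_N|\le C\log N
\]
by Lemma~\ref{lmm:partial_sum_bound}, which proves \eqref{eq:measure_decomp_tv}. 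If $M=0$, then $N=|S_N|\le C\log N$, so $N$ is bounded and the statement is trivial after adjusting $C$; we therefore assume $M\ge1$.

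\textbf{Wasserstein bound.} The pairing $t_{a_i}\leftrightarrow t_{b_i}$ is a coupling of $\mu^+$ and $\mu^-$. Therefore
\[
W_p(\mu^+,\mu^-)\le \max_{1\le i\le M}|t_{a_i}-t_{b_i}|=\tau\max_i|a_i-b_i|,
\]
and it suffices to show $|a_i-b_i|\le C\log N$ for every $i\le M$.

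\textbf{Key step.} Fix $i$ and suppose without loss of generality that $a_i<b_i$. Set $n:=b_i-1$. Among the indices $1,\dots,n$ there are at least $i$ plus-signs and exactly $i-1$ minus-signs. The interval $(a_i,b_i)$ contains no minus index: any minus index there would be some $b_j$ with $j<i$, and then $b_j<b_i$ forces $b_j\le b_{i-1}$, yet $b_{i-1}$ may lie before or after $a_i$. To handle this cleanly, compare partial sums instead. We have $S_{a_i}=i-\#\{j:b_j<a_i\}\ge i-(i-1)=1$. Moreover, all indices in $(\max(a_i,b_{i-1}),b_i)$ are plus-signs. The cleanest route is the following.

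At time $b_i-1$ the count of minus-signs is $i-1$, while the count of plus-signs is at least $\#\{k\le b_i-1\}-(i-1)=b_i-i$. Hence
\[
S_{b_i-1}\ge (b_i-i)-(i-1)=b_i-2i+1.
\]
Also, the count of plus-signs up to $a_i$ is exactly $i$, so
\[
S_{a_i}=i-(a_i-i)=2i-a_i .
\]
Adding the two relations gives
\[
S_{b_i-1}+S_{a_i}\ge b_i-a_i+1,
\]
and Corollary~\ref{cor:partial_sum_sup} then yields $b_i-a_i\le 2\max_n|S_n|\le C\log N$. The case $b_i<a_i$ is symmetric, with the signs reversed. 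This establishes \eqref{eq:measure_decomp_wp} with a constant depending only on $R$.

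\textbf{Main obstacle.} The only delicate point is this counting argument linking the matching gap to partial sums, and it reduces to the two identities above. The remaining steps are bookkeeping.
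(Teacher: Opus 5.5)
Your proof is correct and follows essentially the same route as the paper: the same order-preserving matching of $I_+$ and $I_-$, the identity $\|\nu_r\|_{TV}=|S_N|$, the pairing coupling, and a bound on each matched gap by a multiple of $\max_n|S_n|$. Your endpoint identity $S_{b_i-1}+S_{a_i}=b_i-a_i+1$ is a cleaner replacement for the paper's contradiction argument and gives the factor $2$ instead of $5$. You should delete the abandoned sentence claiming that $(a_i,b_i)$ contains no minus index, since it is false in general and your argument does not use it.
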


\begin{proof}
We only treat the case $N_+\ge N_-$, since the case $N_+<N_-$ is completely analogous
after exchanging the roles of ``$+$'' and ``$-$''.

Let
\[
I_+=\{p_1<\cdots<p_{N_+}\},
\qquad
I_-=\{m_1<\cdots<m_{N_-}\},
\]
and note that in the present case $M=N_-$. We define
\[
\mu^+:=\frac1M\sum_{i=1}^{M}\delta_{t_{p_i}},
\qquad
\mu^-:=\frac1M\sum_{i=1}^{M}\delta_{t_{m_i}},
\qquad
\nu_r:=\sum_{i=M+1}^{N_+}\delta_{t_{p_i}}.
\]
Then $\mu^\pm$ are probability measures on $[0,T]$, while $\nu_r$ is a signed counting
measure. Moreover, it is clear that
\[
\nu=M(\mu^+-\mu^-)+\nu_r.
\]

Next, since $\nu_r$ contains exactly $N_+-N_-=|N_+-N_-|$ Dirac masses with positive sign,
its total variation is
$\|\nu_r\|_{TV}=N_+-N_-$.
By Corollary~\ref{cor:partial_sum_sup} we have
\[
N_+-N_-=|S_N|\le \frac{2(3R-2)}{\log R}\log N,
\]
which proves \eqref{eq:measure_decomp_tv}.

It remains to estimate the Wasserstein distance. For $i=1,\dots,M$, define
\[
n_i:=|p_i-m_i|.
\]
We claim that
\begin{equation}\label{eq:ni_pointwise_bound}
n_i\le 5\,\max_{0\le n\le N}|S_n|,
\qquad i=1,\dots,M.
\end{equation}
Indeed, this is exactly the pairing estimate used in the proof of Theorem~\ref{thm:weighted_sum}.
For completeness, we sketch the argument. Assume by contradiction that for some $i_0$,
\[
n_{i_0}>5\,\max_{0\le n\le N}|S_n|.
\]
Without loss of generality, suppose $p_{i_0}<m_{i_0}$. Then, among
$\xi_{p_{i_0}+1},\dots,\xi_{m_{i_0}}$,
the number of negative signs is at least
\[
n_{i_0}-\max_{0\le n\le N}|S_n|.
\]
Hence
\[
S_{m_{i_0}}-S_{p_{i_0}}
=
\#\{+1\}-\#\{-1\}
\le
n_{i_0}-2\bigl(n_{i_0}-\max_{0\le n\le N}|S_n|\bigr)
=
2\max_{0\le n\le N}|S_n|-n_{i_0},
\]
which is strictly less than
\[
-3\max_{0\le n\le N}|S_n|.
\]
This contradicts
\[
|S_{m_{i_0}}-S_{p_{i_0}}|
\le
|S_{m_{i_0}}|+|S_{p_{i_0}}|
\le
2\max_{0\le n\le N}|S_n|.
\]
Thus \eqref{eq:ni_pointwise_bound} holds.

Now consider the coupling
\[
\gamma:=\frac1M\sum_{i=1}^{M}\delta_{(t_{p_i},t_{m_i})}
\]
between $\mu^+$ and $\mu^-$. Then, for every $p\ge1$,
\[
W_p(\mu^+,\mu^-)^p
\le
\int_{[0,T]\times[0,T]} |x-y|^p\,d\gamma(x,y)
=
\frac1M\sum_{i=1}^{M}|t_{p_i}-t_{m_i}|^p.
\]
Since $t_n=n\tau$, one has
\[
|t_{p_i}-t_{m_i}|=\tau n_i.
\]
Using \eqref{eq:ni_pointwise_bound}, we obtain
\[
|t_{p_i}-t_{m_i}|
\le
5\tau \max_{0\le n\le N}|S_n|
\le
5\tau\cdot \frac{2(3R-2)}{\log R}\log N.
\]
Therefore,
\[
W_p(\mu^+,\mu^-)
\le
5\cdot \frac{2(3R-2)}{\log R}\,\tau\log N
=
\frac{10(3R-2)}{\log R}\,\tau\log N,
\]
which proves \eqref{eq:measure_decomp_wp}.

Hence the proof is complete.
\end{proof}

\section{Convergence of the quasi-random splitting for two operators}

In this section we restrict ourselves to the two-operator case and reorganize the proof in the following way.
We first isolate a common error template that only describes the global propagation mechanism.
We then verify this template separately for the bounded linear problem and for the Allen--Cahn equation.
In this way, the abstract part records only the common structure, while the problem-dependent arguments remain in the two concrete applications.

\subsection{A common error template for the two-operator quasi-random splitting}\label{sec:general_convergence}

For two operators, we write
\begin{equation}\label{eq:general:S_pm}
S^{+}(\tau):=S_{2}(\tau)S_{1}(\tau),
\qquad
S^{-}(\tau):=S_{1}(\tau)S_{2}(\tau).
\end{equation}
Let $\{\xi_n\}_{n\ge1}$ be the sign sequence from Definition~\ref{def:sign_sequence}. Then the quasi-random splitting scheme takes the form
\begin{equation}\label{eq:general:scheme}
u_{n+1}=S^{\xi_{n+1}}(\tau)[u_n],
\qquad n=0,1,\dots,N-1,
\end{equation}
where
\[
S^{\xi_{n+1}}(\tau)=
\begin{cases}
S^{+}(\tau), & \xi_{n+1}=+1,\\
S^{-}(\tau), & \xi_{n+1}=-1.
\end{cases}
\]
The exact solution satisfies
\begin{equation}\label{eq:general:exact}
u(t_{n+1})=T(\tau)[u(t_n)],
\qquad n=0,1,\dots,N-1.
\end{equation}
We define the global error by
\begin{equation}\label{eq:general:error}
e_n:=u_n-u(t_n),
\qquad n=0,1,\dots,N.
\end{equation}

The following theorem isolates the common propagation mechanism behind the two concrete convergence proofs below. Its assumptions are tailored to the bounded linear and Allen--Cahn settings treated in this section.

\begin{lemma}[Discrete Gr\"onwall inequality]\label{lem:general:discrete_gronwall}
Let $\{a_n\}_{n\ge0}$ be a sequence of nonnegative numbers satisfying
\begin{equation}\label{eq:general:discrete_gronwall_assumption}
a_n\le \alpha+\beta\tau\sum_{k=0}^{n-1} a_k,
\qquad n\ge1,
\end{equation}
where $\alpha,\beta,\tau\ge0$ are constants. Then
\begin{equation}\label{eq:general:discrete_gronwall_conclusion}
a_n\le \alpha e^{\beta n\tau},
\qquad n\ge0.
\end{equation}
\end{lemma}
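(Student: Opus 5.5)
The plan is to prove the discrete Gr\"onwall inequality by strong induction on $n$. The stronger intermediate claim I will establish is
\[
a_n\le \alpha(1+\beta\tau)^n, \qquad n\ge 0.
\]
The stated bound then follows from the elementary inequality $1+x\le e^x$ for $x\ge0$, which gives $(1+\beta\tau)^n\le e^{\beta n\tau}$.

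The induction runs as follows.

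\emph{Base case $n=0$.} The hypothesis \eqref{eq:general:discrete_gronwall_assumption} is only assumed for $n\ge1$, so $a_0$ must be handled separately. I will read the assumption as also covering $n=0$ with an empty sum, which gives $a_0\le\alpha$. This is the standard convention, and it is clearly necessary: without it $a_0$ is unconstrained and the conclusion can fail at $n=0$.

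\emph{Inductive step.} Suppose $a_k\le\alpha(1+\beta\tau)^k$ for all $k\le n-1$. Inserting this into \eqref{eq:general:discrete_gronwall_assumption} and summing the geometric series gives
\[
a_n\le \alpha+\beta\tau\,\alpha\sum_{k=0}^{n-1}(1+\beta\tau)^k.
\]
If $\beta\tau>0$, the geometric sum equals $\bigl((1+\beta\tau)^n-1\bigr)/(\beta\tau)$. The right-hand side therefore collapses exactly to $\alpha(1+\beta\tau)^n$, which closes the induction. If $\beta\tau=0$, the hypothesis directly gives $a_n\le\alpha$, which is again consistent with the claim.

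An equivalent route avoids the geometric series. Let $b_n:=\alpha+\beta\tau\sum_{k<n}a_k$. Then $b_{n+1}=b_n+\beta\tau a_n\le(1+\beta\tau)b_n$ and $b_0=\alpha$, so $a_n\le b_n\le\alpha(1+\beta\tau)^n$. I would present whichever version is shorter.

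There is no real obstacle in this argument. The only point needing care is the base case $a_0\le\alpha$, which I will note explicitly as part of the hypothesis (the $n=0$ case with an empty sum).
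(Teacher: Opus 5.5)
The paper states this lemma without proof, treating it as standard, so there is no paper argument to compare against. Your proof is correct and is the standard one: induction (or the comparison sequence $b_n$) gives $a_n\le\alpha(1+\beta\tau)^n$, and then $1+\beta\tau\le e^{\beta\tau}$ finishes.

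You are also right that the statement as written needs the extra condition $a_0\le\alpha$. For example, with $\alpha=0$, $\beta\tau=1$ and $a_0=a_1=1$, the hypothesis holds but the conclusion fails at both $n=0$ and $n=1$. In the paper's only use of the lemma, $a_n=\|e_n\|_X$ with $e_0=0$, so this condition is satisfied there.
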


\begin{theorem}[Common error template]\label{thm:general:abstract_convergence}
Let $X$ be the normed space in which the error is measured, and let $Y\hookrightarrow X$ be a more regular space.
Assume that the exact solution is uniformly bounded in $Y$ on $[0,T]$.
Assume further that the following three properties hold.

\medskip
\noindent
\textbf{(i) Local defect.}
There exists a mapping $\Phi:Y\to X$ such that, for every $a\in Y$,
\begin{equation}\label{eq:general:local_pm}
S^{\pm}(\tau)[a]-T(\tau)[a]
=
\pm \frac{\tau^2}{2}\Phi(a)+r_{\pm}(\tau,a),
\end{equation}
where
\begin{equation}\label{eq:general:local_remainder}
\|r_{\pm}(\tau,a)\|_X\le C\tau^3
\end{equation}
for all sufficiently small $\tau$.

\medskip
\noindent
\textbf{(ii) One-step propagation.}
For each $n=0,1,\dots,N-1$, there exists a bounded linear operator $E_n:X\to X$ such that
\begin{equation}\label{eq:general:stability}
S^{\xi_{n+1}}(\tau)[u_n]-S^{\xi_{n+1}}(\tau)[u(t_n)]
=
E_n e_n+\rho_n,
\end{equation}
with
\begin{equation}\label{eq:general:En_bounds}
\|E_n\|_{\mathcal L(X)}\le 1+C\tau,
\qquad
\|E_n-I\|_{\mathcal L(X)}\le C\tau,
\end{equation}
and
\begin{equation}\label{eq:general:stability_remainder}
\|\rho_n\|_X\le C\tau\|e_n\|_X.
\end{equation}

\medskip
\noindent
\textbf{(iii) Regularity of the propagated principal coefficients.}
For $0\le k\le n\le N-1$, define
\begin{equation}\label{eq:general:propagator}
G_{n,k}:=
\begin{cases}
E_nE_{n-1}\cdots E_{k+1}, & 0\le k\le n-1,\\
I, & k=n,
\end{cases}
\end{equation}
and
\begin{equation}\label{eq:general:g_n}
g_n(t_k):=G_{n,k}\Phi(u(t_k)).
\end{equation}
Assume that there exist constants $M,L>0$, independent of $n$, $k$, and $N$, such that
\begin{equation}\label{eq:general:g_bound}
\|g_n(t_k)\|_X\le M,
\qquad 0\le k\le n\le N-1,
\end{equation}
and
\begin{equation}\label{eq:general:g_lip}
\|g_n(t_{k+1})-g_n(t_k)\|_X\le L\tau,
\qquad 0\le k\le n-1\le N-2.
\end{equation}

Then there exist constants $N_0\in\mathbb N$ and $C>0$, independent of $N$, such that for all $N\ge N_0$,
\begin{equation}\label{eq:general:abstract_error}
\max_{0\le n\le N}\|e_n\|_X
\le
C\tau^2\log N.
\end{equation}
Equivalently,
\begin{equation}\label{eq:general:abstract_error_N}
\max_{0\le n\le N}\|e_n\|_X
\le
C\,\frac{\log N}{N^2}.
\end{equation}
\end{theorem}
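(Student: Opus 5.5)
We propose to derive an exact error recursion, unroll it into a discrete Duhamel sum, and then use the sign cancellation of Theorem~\ref{thm:weighted_sum} (in its Abel-summation form) on the principal term. Subtracting \eqref{eq:general:exact} from \eqref{eq:general:scheme} and inserting $S^{\xi_{n+1}}(\tau)[u(t_n)]$, we write
\[
e_{n+1}=\bigl(S^{\xi_{n+1}}(\tau)[u_n]-S^{\xi_{n+1}}(\tau)[u(t_n)]\bigr)+\bigl(S^{\xi_{n+1}}(\tau)[u(t_n)]-T(\tau)[u(t_n)]\bigr).
\]
We then apply (ii) to the first bracket and (i) with $a=u(t_n)\in Y$ to the second. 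This is legitimate because the exact solution is uniformly bounded in $Y$, so the constant in \eqref{eq:general:local_remainder} is uniform in $n$. The result is
\[
e_{n+1}=E_ne_n+\rho_n+\xi_{n+1}\frac{\tau^2}{2}\Phi(u(t_n))+r_n,\qquad \|r_n\|_X\le C\tau^3 .
\]
Since $e_0=0$, unrolling with the propagators \eqref{eq:general:propagator} gives
\[
e_{n+1}=\frac{\tau^2}{2}\sum_{k=0}^{n}\xi_{k+1}G_{n,k}\Phi(u(t_k))+\sum_{k=0}^{n}G_{n,k}(\rho_k+r_k).
\]
We use the convention $G_{n,n}=I$ and note that the outer $E$ factors shift the index. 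Precisely, the term created at step $k$ is multiplied by $E_n\cdots E_{k+1}=G_{n,k}$.

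\textbf{Principal term.} Fix $n$ and set $g_n(t_k)=G_{n,k}\Phi(u(t_k))$. We apply Abel summation in the form \eqref{eq:prelim:abel}, now with $X$-valued coefficients, and use the partial sums $S_{k}=\sum_{j\le k}\xi_j$:
\[
\sum_{k=0}^{n}\xi_{k+1}g_n(t_k)=S_{n+1}g_n(t_n)-\sum_{k=0}^{n-1}S_{k+1}\bigl(g_n(t_{k+1})-g_n(t_k)\bigr).
\]
Corollary~\ref{cor:partial_sum_sup} gives $|S_m|\le C\log N$. Combined with \eqref{eq:general:g_bound} and \eqref{eq:general:g_lip}, the $X$-norm of this sum is at most $C\log N\,(M+nL\tau)\le C(M+LT)\log N$. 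The principal contribution is therefore bounded by $C\tau^2\log N$, uniformly in $n$. This is exactly why the regularity assumption (iii) is phrased in terms of the propagated coefficients $g_n$ rather than $\Phi(u(t_k))$ alone. Cancellation must survive the non-commuting propagation, so the Abel step has to be done for each $n$ separately, with $n$-dependent coefficients whose bounds are uniform in $n$.

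\textbf{Remainder terms and Gr\"onwall.} Iterating \eqref{eq:general:En_bounds} gives $\|G_{n,k}\|\le(1+C\tau)^{n-k}\le e^{CT}$. Hence
\[
\Bigl\|\sum_{k}G_{n,k}r_k\Bigr\|_X\le e^{CT}NC\tau^3\le C\tau^2,
\qquad
\Bigl\|\sum_k G_{n,k}\rho_k\Bigr\|_X\le e^{CT}C\tau\sum_{k=0}^{n}\|e_k\|_X
\]
by \eqref{eq:general:stability_remainder}. Setting $a_n=\|e_n\|_X$, we obtain
\[
a_{n+1}\le C\tau^2\log N+C\tau\sum_{k\le n}a_k .
\]
The $k=n$ term can be absorbed into the left-hand side for $\tau$ small, which is where $N_0$ enters. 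Lemma~\ref{lem:general:discrete_gronwall} then yields $a_n\le C\tau^2\log N\,e^{CT}$, which is \eqref{eq:general:abstract_error}. The form \eqref{eq:general:abstract_error_N} follows from $\tau=T/N$.

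The main obstacle is the principal term, since without cancellation it is only $O(\tau)$. The difficulty is getting the indexing of $G_{n,k}$ against $\xi_{k+1}$ consistent so that Abel summation applies with the shifted partial sums. For this step we rely on assumption (iii) as stated. The hypothesis $\|E_n-I\|\le C\tau$ is not needed beyond guaranteeing (iii) in applications.
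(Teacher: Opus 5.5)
Your proposal is correct and follows the paper's proof essentially step for step: the same error recursion obtained from (i) and (ii), the same discrete Duhamel representation with $G_{n,k}$, Abel summation against the shifted partial sums $S_{k+1}$ bounded via Corollary~\ref{cor:partial_sum_sup}, and the same remainder-plus-discrete-Gr\"onwall closing argument. One small correction: no absorption is needed, and none is possible, since the term $a_n$ on the right is not the left-hand side $a_{n+1}$. The inequality $a_{n+1}\le C\tau^2\log N+C\tau\sum_{k=0}^{n}a_k$ is already exactly the hypothesis of Lemma~\ref{lem:general:discrete_gronwall} after the shift $m=n+1$. Accordingly, $N_0$ is only needed for the small-$\tau$ requirement in (i) and for $N\ge2$ in Corollary~\ref{cor:partial_sum_sup}.
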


\begin{proof}
From \eqref{eq:general:local_pm} and \eqref{eq:general:stability}, with $a=u(t_n)$, we get
\begin{equation}\label{eq:general:error_recursion}
e_{n+1}
=
E_n e_n+\frac{\tau^2}{2}\Phi(u(t_n))\xi_{n+1}+R_n,
\end{equation}
where
\begin{equation}\label{eq:general:Rn}
R_n:=r_{\xi_{n+1}}(\tau,u(t_n))+\rho_n.
\end{equation}
By \eqref{eq:general:local_remainder} and \eqref{eq:general:stability_remainder},
\begin{equation}\label{eq:general:Rn_bound}
\|R_n\|_X\le C\bigl(\tau^3+\tau\|e_n\|_X\bigr).
\end{equation}

Iterating \eqref{eq:general:error_recursion}, we obtain
\begin{equation}\label{eq:general:error_representation}
e_{n+1}
=
\frac{\tau^2}{2}\sum_{k=0}^{n}G_{n,k}\Phi(u(t_k))\xi_{k+1}
+
\sum_{k=0}^{n}G_{n,k}R_k
=
\frac{\tau^2}{2}\sum_{k=0}^{n}g_n(t_k)\xi_{k+1}
+
\sum_{k=0}^{n}G_{n,k}R_k.
\end{equation}

We first estimate the oscillatory term.
By Abel's summation formula,
\begin{equation}\label{eq:general:Abel}
\sum_{k=0}^{n}g_n(t_k)\xi_{k+1}
=
g_n(t_n)S_{n+1}
-
\sum_{k=0}^{n-1}\bigl(g_n(t_{k+1})-g_n(t_k)\bigr)S_{k+1},
\end{equation}
where $S_m=\sum_{j=1}^{m}\xi_j$ and $S_0=0$.
Using Corollary~\ref{cor:partial_sum_sup}, together with \eqref{eq:general:g_bound} and \eqref{eq:general:g_lip}, we get
\begin{align}
\left\|
\sum_{k=0}^{n}g_n(t_k)\xi_{k+1}
\right\|_X
&\le
|S_{n+1}|\,\|g_n(t_n)\|_X
+
\sum_{k=0}^{n-1}|S_{k+1}|\,\|g_n(t_{k+1})-g_n(t_k)\|_X \notag\\
&\le
C\log N\left(M+\sum_{k=0}^{n-1}L\tau\right) \notag\\
&\le C\log N.
\label{eq:general:osc_bound}
\end{align}
Hence
\begin{equation}\label{eq:general:main_term_bound}
\left\|
\frac{\tau^2}{2}\sum_{k=0}^{n}g_n(t_k)\xi_{k+1}
\right\|_X
\le
C\tau^2\log N.
\end{equation}

Next, by \eqref{eq:general:En_bounds},
\begin{equation}\label{eq:general:G_bound}
\|G_{n,k}\|_{\mathcal L(X)}
\le (1+C\tau)^{n-k}
\le e^{CT}.
\end{equation}
Therefore, by \eqref{eq:general:Rn_bound},
\begin{align}
\left\|
\sum_{k=0}^{n}G_{n,k}R_k
\right\|_X
&\le
\sum_{k=0}^{n}\|G_{n,k}\|_{\mathcal L(X)}\,\|R_k\|_X \notag\\
&\le
Ce^{CT}\sum_{k=0}^{n}\bigl(\tau^3+\tau\|e_k\|_X\bigr) \notag\\
&\le C\tau^2+C\tau\sum_{k=0}^{n}\|e_k\|_X.
\label{eq:general:remainder_bound}
\end{align}
Substituting \eqref{eq:general:main_term_bound} and \eqref{eq:general:remainder_bound} into \eqref{eq:general:error_representation}, we obtain
\begin{equation}\label{eq:general:gronwall_ready}
\|e_{n+1}\|_X
\le
C\tau^2\log N+C\tau\sum_{k=0}^{n}\|e_k\|_X.
\end{equation}
Applying Lemma~\ref{lem:general:discrete_gronwall} with $a_n=\|e_n\|_X$ and $\alpha=C\tau^2\log N$, we conclude that
\[
\max_{0\le n\le N}\|e_n\|_X\le C\tau^2\log N.
\]
This proves \eqref{eq:general:abstract_error}. The form \eqref{eq:general:abstract_error_N} follows from $\tau=T/N$.
\end{proof}

\subsection{Application I: bounded linear operators}\label{sec:ode}

We first apply the template to the bounded linear problem
\begin{equation}\label{eq:ode:model}
\frac{\mathrm d}{\mathrm dt}u(t)=(A_1+A_2)u(t),
\qquad
u(0)=u_0\in X,
\end{equation}
where $X$ is a Banach space and $A_1,A_2\in\mathcal L(X)$ are bounded linear operators.
The exact flow is
\[
T(t)=e^{t(A_1+A_2)},
\]
while the two split flows are
\[
S_1(t)=e^{tA_1},
\qquad
S_2(t)=e^{tA_2}.
\]
Hence
\[
S^{+}(\tau)=e^{\tau A_2}e^{\tau A_1},
\qquad
S^{-}(\tau)=e^{\tau A_1}e^{\tau A_2}.
\]

The first ingredient is the standard BCH expansion.

\begin{lemma}\label{lem:lin:bch}
Let $B,C\in\mathcal L(X)$ be bounded linear operators.
Then, as $\tau\to0$ in the operator norm of $\mathcal L(X)$,
\begin{equation}\label{eq:lin:bch}
e^{\tau B}e^{\tau C}-e^{\tau(B+C)}
=
\frac{\tau^2}{2}[B,C]
+
\frac{\tau^3}{12}\bigl([B,[B,C]]-[C,[B,C]]\bigr)
+
O(\tau^4),
\end{equation}
where $[B,C]=BC-CB$ denotes the commutator.
\end{lemma}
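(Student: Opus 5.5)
The plan is to expand both sides in convergent operator power series. Since $B,C\in\mathcal L(X)$, the series $e^{\tau B}=\sum_{k\ge0}\tau^kB^k/k!$ and its analogues converge absolutely in $\mathcal L(X)$. For $|\tau|\le 1$, the tail of order $\ge4$ in any such product is bounded by $\tau^4 e^{2(\|B\|+\|C\|)}$ times a constant, and this gives the $O(\tau^4)$ remainder uniformly. So everything reduces to matching coefficients of $\tau^0,\dots,\tau^3$. Orders $0$ and $1$ cancel trivially.

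At order $2$, $\tfrac12(B^2+2BC+C^2)-\tfrac12(B+C)^2=\tfrac12(BC-CB)=\tfrac12[B,C]$, which gives the stated quadratic term.

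For order $3$ I will organise the computation through the BCH logarithm rather than by brute force. For small $\tau$, write $e^{\tau B}e^{\tau C}=e^{Z(\tau)}$ with
\[
Z(\tau)=\tau S+\tfrac{\tau^2}{2}K+\tfrac{\tau^3}{12}\bigl([B,K]-[C,K]\bigr)+O(\tau^4),\qquad S:=B+C,\quad K:=[B,C].
\]
This is obtained from $Z=\log(e^{\tau B}e^{\tau C})$ with the logarithm series, which converges in $\mathcal L(X)$ for small $\tau$. It is exactly the commutator combination appearing in \eqref{eq:lin:bch}. Expanding $e^{Z}$ and subtracting $e^{\tau S}$ then gives
\[
e^{\tau B}e^{\tau C}-e^{\tau S}=\tfrac{\tau^2}{2}K+\tau^3\Bigl(\tfrac1{12}\bigl([B,K]-[C,K]\bigr)+\tfrac14(SK+KS)\Bigr)+O(\tau^4).
\]

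\textbf{Main obstacle.} The key and most delicate step is this last identification. The symmetric product term $\tfrac14(SK+KS)$ arises from the cross term of $\tfrac12 Z^2$ and is not a Lie element. Hence the cubic coefficient in \eqref{eq:lin:bch} is exactly right for $\log(e^{\tau B}e^{\tau C})-\tau(B+C)$, but not for the difference of the exponentials themselves. I would verify this directly by comparing the cubic terms. The direct coefficient is
\[
\tfrac16\bigl(2B^2C+2BC^2-BCB-CB^2-CBC-C^2B\bigr),
\]
and it agrees with the displayed expression above.

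I therefore plan to prove \eqref{eq:lin:bch} in the BCH (logarithmic) form, recording the extra $\tfrac{\tau^3}{4}(SK+KS)$ term in the exponential form. This term is irrelevant downstream. Theorem~\ref{thm:general:abstract_convergence} only uses \eqref{eq:general:local_pm} with $\Phi(a)=\pm[A_2,A_1]a$ and a remainder bounded by $C\tau^3$. Both forms yield this: take $B=A_2,C=A_1$ for $S^+$ and $B=A_1,C=A_2$ for $S^-$, noting $[A_1,A_2]=-[A_2,A_1]$.
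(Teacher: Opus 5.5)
Your objection is correct: the statement as written is false, and the error is in the last step of the paper's own proof. The paper expands both exponentials directly, as in your verification step, and correctly obtains the cubic coefficient $\tfrac16\bigl(2B^2C+2BC^2-BCB-CB^2-CBC-C^2B\bigr)$. It then asserts
\[
2B^2C+2BC^2-BCB-CB^2-CBC-C^2B=\tfrac12\bigl([B,[B,C]]-[C,[B,C]]\bigr),
\]
which already fails on the word $B^2C$: the coefficient is $2$ on the left and $\tfrac12$ on the right.

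A concrete counterexample uses the matrix units $B=E_{11}$, $C=E_{12}$ in $\mathbb R^{2\times2}$. Here $B+C$ is idempotent, so $e^{\tau B}e^{\tau C}-e^{\tau(B+C)}=(\tau e^{\tau}-e^{\tau}+1)E_{12}=\bigl(\tfrac{\tau^2}{2}+\tfrac{\tau^3}{3}+O(\tau^4)\bigr)E_{12}$. The lemma instead predicts $\bigl(\tfrac{\tau^2}{2}+\tfrac{\tau^3}{12}\bigr)E_{12}$. The discrepancy $\tfrac{\tau^3}{4}E_{12}$ is exactly your extra term, since here $K=C$ and $SK+KS=C$.

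Your corrected coefficient $\tfrac1{12}([B,K]-[C,K])+\tfrac14(SK+KS)$ checks out word by word. Indeed $SK+KS=B^2C+BC^2-CB^2-C^2B$ and $[B,K]-[C,K]=B^2C+BC^2+CB^2+C^2B-2BCB-2CBC$, and these combine to the direct coefficient. Your diagnosis of the source is also right: the displayed commutator term is the cubic term of $\log(e^{\tau B}e^{\tau C})-\tau(B+C)$, not of the difference of exponentials.

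The BCH detour is not needed for a self-contained proof. Quoting the third-order BCH term costs as much as the direct comparison, which both you and the paper carry out; the BCH picture explains the error but is not required to fix it. The simplest repair is to state the lemma either with your corrected cubic term, or just as $e^{\tau B}e^{\tau C}-e^{\tau(B+C)}=\tfrac{\tau^2}{2}[B,C]+O(\tau^3)$. The latter follows from your order-$\le 2$ matching and the same tail estimate applied from order $3$ on.

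That weaker form is all Proposition~\ref{prop:ode:verification} uses, so Theorem~\ref{thm:ode:convergence} is unaffected. Your assignment $B=A_2$, $C=A_1$ for $S^+=e^{\tau A_2}e^{\tau A_1}$ also corrects a sign slip in that proposition. It writes $\Phi=[A_1,A_2]$, although the quadratic term of $S^+-T$ is $\tfrac{\tau^2}{2}[A_2,A_1]$. This slip is harmless too, because the template only requires the two defects to have opposite signs.
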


\begin{proof}
Since $B$ and $C$ are bounded, all products below are well-defined in $\mathcal L(X)$.
Expanding both exponentials in powers of $\tau$, we obtain
\[
e^{\tau B}=I+\tau B+\frac{\tau^2}{2}B^2+\frac{\tau^3}{6}B^3+O(\tau^4),
\qquad
e^{\tau C}=I+\tau C+\frac{\tau^2}{2}C^2+\frac{\tau^3}{6}C^3+O(\tau^4).
\]
Hence
\begin{align*}
e^{\tau B}e^{\tau C}
&=
I+\tau(B+C)+\frac{\tau^2}{2}(B^2+2BC+C^2)\\
&\quad+
\frac{\tau^3}{6}(B^3+3B^2C+3BC^2+C^3)+O(\tau^4).
\end{align*}
On the other hand,
\begin{align*}
e^{\tau(B+C)}
&=
I+\tau(B+C)+\frac{\tau^2}{2}(B^2+BC+CB+C^2)\\
&\quad+
\frac{\tau^3}{6}(B^3+B^2C+BCB+BC^2+CB^2+CBC+C^2B+C^3)+O(\tau^4).
\end{align*}
Subtracting the two expansions gives
\begin{align*}
e^{\tau B}e^{\tau C}-e^{\tau(B+C)}
&=
\frac{\tau^2}{2}(BC-CB)\\
&\quad+
\frac{\tau^3}{6}\bigl(2B^2C+2BC^2-BCB-CB^2-CBC-C^2B\bigr)+O(\tau^4).
\end{align*}
A direct computation shows that
\[
2B^2C+2BC^2-BCB-CB^2-CBC-C^2B
=
\frac12\bigl([B,[B,C]]-[C,[B,C]]\bigr).
\]
This yields \eqref{eq:lin:bch}.
\end{proof}

\begin{proposition}\label{prop:ode:verification}
For the bounded linear problem \eqref{eq:ode:model}, the assumptions of Theorem~\ref{thm:general:abstract_convergence} are satisfied with $Y=X$ and
\begin{equation}\label{eq:ode:Phi_def}
\Phi(v):=[A_1,A_2]v.
\end{equation}
\end{proposition}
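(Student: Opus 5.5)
We verify the three hypotheses of Theorem~\ref{thm:general:abstract_convergence} in turn, taking $Y=X$. Since $A_1,A_2$ are bounded, $\|u(t)\|_X\le e^{T(\|A_1\|+\|A_2\|)}\|u_0\|_X$ on $[0,T]$, so the uniform bound on the exact solution is immediate.

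For (i), we apply Lemma~\ref{lem:lin:bch}. With $B=A_2$, $C=A_1$ we get $S^+(\tau)-T(\tau)=\frac{\tau^2}{2}[A_2,A_1]+O(\tau^3)$. With $B=A_1$, $C=A_2$ we get $S^-(\tau)-T(\tau)=\frac{\tau^2}{2}[A_1,A_2]+O(\tau^3)$. This gives $\pm\frac{\tau^2}{2}\Phi$ with $\Phi=[A_1,A_2]$, up to a global sign convention. If the signs come out reversed relative to \eqref{eq:general:local_pm}, we replace $\Phi$ by $-\Phi$, which does not affect (iii).

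To make the remainder explicit, we use the Taylor remainder of the exponential series. This gives $\|r_\pm(\tau,a)\|_X\le C\tau^3\|a\|_X$, with $C$ depending only on $\|A_1\|$, $\|A_2\|$ and an upper bound for $\tau$. Applied at $a=u(t_n)$, this is $\le C\tau^3$ by the uniform bound above.

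For (ii), linearity gives $S^{\xi_{n+1}}(\tau)u_n-S^{\xi_{n+1}}(\tau)u(t_n)=S^{\xi_{n+1}}(\tau)e_n$ exactly. We therefore take $E_n:=S^{\xi_{n+1}}(\tau)$ and $\rho_n=0$. The bounds then follow from $\|e^{\tau A}\|\le e^{\tau\|A\|}$ and $\|e^{\tau A}-I\|\le \tau\|A\|e^{\tau\|A\|}$. Hence $\|E_n\|\le e^{\tau(\|A_1\|+\|A_2\|)}\le 1+C\tau$, and $E_n-I=e^{\tau A_2}(e^{\tau A_1}-I)+(e^{\tau A_2}-I)$ (or the analogous identity for $S^-$) gives $\|E_n-I\|\le C\tau$.

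For (iii), we have $g_n(t_k)=G_{n,k}[A_1,A_2]u(t_k)$. The bound $\|G_{n,k}\|\le e^{CT}$ together with the uniform bound on $u$ gives the constant $M$.

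The Lipschitz estimate is the only step requiring some care. Because $G_{n,k}=G_{n,k+1}E_{k+1}$, we write
\[
g_n(t_{k+1})-g_n(t_k)=G_{n,k+1}\Bigl([A_1,A_2]\bigl(u(t_{k+1})-u(t_k)\bigr)+(I-E_{k+1})[A_1,A_2]u(t_k)\Bigr).
\]
Each bracketed term is $O(\tau)$. The first is $O(\tau)$ because $\|u(t_{k+1})-u(t_k)\|\le \tau\|A_1+A_2\|\sup_t\|u(t)\|$. The second is $O(\tau)$ because $\|E_{k+1}-I\|\le C\tau$. Combined with $\|G_{n,k+1}\|\le e^{CT}$, this gives \eqref{eq:general:g_lip} with $L$ independent of $n,k,N$.

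We also need to check the indexing. When $k+1=n$, the operator $G_{n,k}=E_n$ while $G_{n,n}=I$, so the factorization $G_{n,k}=G_{n,k+1}E_{k+1}$ must be confirmed against definition \eqref{eq:general:propagator}. As written, $G_{n,k}=E_n\cdots E_{k+1}$, so $G_{n,k}=G_{n,k+1}E_{k+1}$ indeed holds. None of the steps presents a genuine obstacle, since everything reduces to bounded-operator exponential estimates.
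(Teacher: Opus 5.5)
Your proof is correct and follows the paper's argument essentially step for step: Lemma~\ref{lem:lin:bch} for the local defect, $E_n:=S^{\xi_{n+1}}(\tau)$ with $\rho_n=0$ for the propagation step, and the factorization $g_n(t_{k+1})-g_n(t_k)=G_{n,k+1}\bigl(\Phi(u(t_{k+1}))-E_{k+1}\Phi(u(t_k))\bigr)$ for the grid-Lipschitz bound. On the sign, your computation is the right one and you should state it outright rather than conditionally: since $S^{+}(\tau)=e^{\tau A_2}e^{\tau A_1}$, one has $S^{\pm}(\tau)-T(\tau)=\mp\frac{\tau^2}{2}[A_1,A_2]+O(\tau^3)$, so hypothesis (i) holds with $\Phi(v)=[A_2,A_1]v$ rather than the stated $[A_1,A_2]v$ (the paper's proof slips by writing $+\frac{\tau^2}{2}[A_1,A_2]$ for $S^{+}$, while its own Allen--Cahn formula $\Phi(a)=D\cR(a)\cL a-\cL(\cR(a))$ specializes to $[A_2,A_1]$); this is harmless because the template only needs some $\Phi$, and (iii) is invariant under $\Phi\mapsto-\Phi$.
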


\begin{proof}
Since the exact solution is given by
\[
u(t)=e^{t(A_1+A_2)}u_0,
\]
it is continuous and bounded on $[0,T]$ in $X$.
Thus the a priori bound in the template is automatic.

We verify the three ingredients one by one.

\medskip
\noindent
\textbf{Local defect.}
By Lemma~\ref{lem:lin:bch},
\[
S^{+}(\tau)-T(\tau)
=
\frac{\tau^2}{2}[A_1,A_2]+O_{\mathcal L(X)}(\tau^3),
\]
and
\[
S^{-}(\tau)-T(\tau)
=
\frac{\tau^2}{2}[A_2,A_1]+O_{\mathcal L(X)}(\tau^3)
=
-\frac{\tau^2}{2}[A_1,A_2]+O_{\mathcal L(X)}(\tau^3).
\]
Therefore,
\[
S^{\pm}(\tau)[a]-T(\tau)[a]
=
\pm \frac{\tau^2}{2}[A_1,A_2]a+r_{\pm}(\tau,a),
\qquad
\|r_{\pm}(\tau,a)\|\le C\tau^3\|a\|,
\]
which is exactly \eqref{eq:general:local_pm}--\eqref{eq:general:local_remainder} with $\Phi$ given by \eqref{eq:ode:Phi_def}.

\medskip
\noindent
\textbf{One-step propagation.}
For each $n$, let
\[
E_n:=S^{\xi_{n+1}}(\tau)=
\begin{cases}
e^{\tau A_2}e^{\tau A_1}, & \xi_{n+1}=+1,\\
e^{\tau A_1}e^{\tau A_2}, & \xi_{n+1}=-1.
\end{cases}
\]
Since the problem is linear,
\[
S^{\xi_{n+1}}(\tau)[u_n]-S^{\xi_{n+1}}(\tau)[u(t_n)]
=
E_n e_n,
\]
so we may take $\rho_n:=0$.
Moreover,
\[
\|E_n\|
\le e^{\tau\|A_1\|}e^{\tau\|A_2\|}
\le 1+C\tau,
\]
for sufficiently small $\tau$.
Also,
\[
E_n-I=(e^{\tau A_{\alpha}}-I)e^{\tau A_{\beta}}+(e^{\tau A_{\beta}}-I)
\]
for a suitable choice of $(\alpha,\beta)\in\{(1,2),(2,1)\}$, and therefore
\[
\|E_n-I\|\le C\tau.
\]
Thus \eqref{eq:general:stability}--\eqref{eq:general:stability_remainder} and \eqref{eq:general:En_bounds} hold.

\medskip
\noindent
\textbf{Propagated principal coefficients.}
For fixed $n\in\{0,\dots,N-1\}$, define $G_{n,k}$ and $g_n(t_k)$ as in \eqref{eq:general:propagator} and \eqref{eq:general:g_n}.
Since
\[
\|G_{n,k}\|\le \prod_{j=k+1}^{n}\|E_j\|\le e^{CT},
\]
we obtain
\[
\|g_n(t_k)\|
\le e^{CT}\|[A_1,A_2]\|\,\|u(t_k)\|,
\]
which proves \eqref{eq:general:g_bound}.

For the grid Lipschitz bound, we write
\[
g_n(t_{k+1})-g_n(t_k)
=
G_{n,k+1}\Bigl(\Phi(u(t_{k+1}))-E_{k+1}\Phi(u(t_k))\Bigr).
\]
Hence
\begin{align*}
\|g_n(t_{k+1})-g_n(t_k)\|
&\le \|G_{n,k+1}\|\Bigl(
\|\Phi(u(t_{k+1}))-\Phi(u(t_k))\|
+
\|(I-E_{k+1})\Phi(u(t_k))\|
\Bigr).
\end{align*}
Because $u$ is continuously differentiable and $\Phi$ is linear, the first term is bounded by $C\tau$.
The second term is also bounded by $C\tau$ because $\|I-E_{k+1}\|\le C\tau$ and $\Phi(u(t_k))$ stays bounded on $[0,T]$.
Thus
\[
\|g_n(t_{k+1})-g_n(t_k)\|\le C\tau,
\]
which proves \eqref{eq:general:g_lip}.
This completes the verification.
\end{proof}

\begin{theorem}\label{thm:ode:convergence}
Let $u$ be the exact solution of \eqref{eq:ode:model}, and let $\{u_n\}_{n=0}^{N}$ be generated by the quasi-random splitting scheme.
Then there exist constants $N_0\in\mathbb N$ and $C>0$, independent of $N$, such that for all $N\ge N_0$,
\[
\max_{0\le n\le N}\|u_n-u(t_n)\|
\le C\tau^2\log N.
\]
Equivalently,
\[
\max_{0\le n\le N}\|u_n-u(t_n)\|
\le C\,\frac{\log N}{N^2}.
\]
\end{theorem}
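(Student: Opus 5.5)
The plan is to obtain Theorem~\ref{thm:ode:convergence} as a direct corollary of the common error template, Theorem~\ref{thm:general:abstract_convergence}, with all problem-specific work already contained in Proposition~\ref{prop:ode:verification}. We take $Y=X$ and $\Phi(v)=[A_1,A_2]v$. The proposition verifies the three hypotheses of the template: the local defect with an $O(\tau^3)$ remainder, the one-step propagation with $E_n=S^{\xi_{n+1}}(\tau)$ and $\rho_n=0$, and the boundedness and grid-Lipschitz property of $g_n(t_k)$. The template's hypothesis that the exact solution is uniformly bounded in $Y$ also holds, since $\|u(t)\|\le e^{T(\|A_1\|+\|A_2\|)}\|u_0\|$.

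The only point to check is that every constant $C$, $M$, $L$ produced in the proposition depends only on $\|A_1\|$, $\|A_2\|$, $\|u_0\|$, $T$ and $R$, and not on $N$, $n$ or $k$. In detail:
\begin{itemize}
\item The remainder in Lemma~\ref{lem:lin:bch} is bounded by $C(\|A_1\|,\|A_2\|)\tau^3$ for $\tau\le 1$, by the tail of the exponential series.
\item We have $\|E_n\|\le e^{\tau(\|A_1\|+\|A_2\|)}$, so $\|G_{n,k}\|\le e^{T(\|A_1\|+\|A_2\|)}$ uniformly in $n$ and $k$.
\item The Lipschitz constant of $u$ is bounded by $(\|A_1\|+\|A_2\|)\sup_{[0,T]}\|u\|$.
\end{itemize}
The threshold $N_0$ only needs to ensure that $\tau=T/N\le 1$, so that $e^{C\tau}\le 1+C'\tau$, and that $N\ge2$, so that Corollary~\ref{cor:partial_sum_sup} applies.

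Once these points are in place, the template gives $\max_n\|e_n\|\le C\tau^2\log N$. Substituting $\tau=T/N$ yields the equivalent form $C\log N/N^2$ with a modified constant. The main, and minor, obstacle is the uniformity bookkeeping above. One should also confirm that the sign convention of the scheme \eqref{eq:general:scheme} matches the first-coordinate thresholding used to define $\xi_n$. For $p=2$ this holds because the Fisher--Yates shuffle with a single quasi-random number $q_{n+1}=z_{n+1}$ swaps the two operators exactly when $z_{n+1}$ falls in one half of $[0,1)$. If the shuffle's convention is reversed, then $\xi_n$ is replaced by $-\xi_n$, which leaves $|S_n|$ and hence all bounds unchanged.
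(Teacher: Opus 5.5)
Your proposal is correct and follows the paper's own proof, which consists of one line: combine Proposition~\ref{prop:ode:verification} with the template Theorem~\ref{thm:general:abstract_convergence}. Your extra bookkeeping (uniformity of $C,M,L$ in $n,k,N$, the choice of $N_0$, and matching $\xi_{n+1}$ to $q_{n+1}$ for $p=2$) just makes explicit what the paper leaves implicit. One small remark: since $S^{+}(\tau)=e^{\tau A_2}e^{\tau A_1}$, Lemma~\ref{lem:lin:bch} actually gives $S^{\pm}(\tau)-T(\tau)=\mp\frac{\tau^2}{2}[A_1,A_2]+O(\tau^3)$, so the consistent choice is $\Phi(v)=[A_2,A_1]v$; as you observe for the shuffle convention, a global sign flip of $\Phi$ (equivalently of $\xi$) leaves every bound unchanged.
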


\begin{proof}
The result follows directly from Proposition~\ref{prop:ode:verification} and Theorem~\ref{thm:general:abstract_convergence}.
\end{proof}

\subsection{Application II: the Allen--Cahn equation}\label{sec:pde}

We now apply the same template to the Allen--Cahn equation
\begin{equation}\label{eq:pde:model}
\partial_t u=\cL u+\cR(u)=\nu\Delta u+u-u^3,
\qquad
u|_{t=0}=u_0,
\end{equation}
posed on the torus $\Omega=\mathbb T^d=[0,L]^d$ with periodic boundary conditions.
We use the notation
\[
\cL u:=\nu\Delta u,
\qquad
\cR(u):=u-u^3.
\]
The exact flow is denoted by $T(t)$, while the two subflows are $S_{\cL}(t)$ and $S_{\cR}(t)$.
Accordingly,
\[
S^{+}(\tau)=S_{\cR}(\tau)S_{\cL}(\tau),
\qquad
S^{-}(\tau)=S_{\cL}(\tau)S_{\cR}(\tau).
\]
The error will be measured in
\[
X=W^{k,p}(\Omega),
\qquad
Y=W^{k+6,p}(\Omega)\cap W^{k+5,\infty}(\Omega).
\]

The PDE-specific argument consists of three pieces: uniform Sobolev bounds, local stability, and the second-order local expansion.
Once these are established, the global convergence proof is exactly the same as in the bounded linear case.

The proofs of the next two propositions are not repeated here. Both are direct specializations of the corresponding Sobolev-bound and stability estimates for the more general Allen--Cahn equation with background flow established in \cite{li2025convergence}. Since our present model contains only the diffusion and reaction operators, the advection part in the reference argument simply disappears, and the same estimates remain valid with simpler calculations.

\begin{proposition}\label{prop:pde:bounds}
Let $k\in\mathbb N$ and $p\in[2,\infty]$.
Assume that
\[
u_0\in W^{k+6,p}(\Omega)\cap W^{k+5,\infty}(\Omega).
\]
Then there exists a constant $M>0$, depending only on $T$, $k$, $p$, $\nu$, and the initial data, such that
\begin{equation}\label{eq:pde:exact_bound}
\sup_{0\le t\le T}
\Bigl(
\|u(t)\|_{k+6,p}+\|u(t)\|_{k+5,\infty}
\Bigr)
\le M,
\end{equation}
and
\begin{equation}\label{eq:pde:numerical_bound}
\sup_{n\tau\le T}
\Bigl(
\|u_n\|_{k+6,p}+\|u_n\|_{k+5,\infty}
\Bigr)
\le M.
\end{equation}
Moreover, the same bound holds for the intermediate values
\[
S_{\cL}(\tau)[u_n],
\qquad
S_{\cR}(\tau)[S_{\cL}(\tau)[u_n]],
\qquad
S_{\cR}(\tau)[u_n],
\qquad
S_{\cL}(\tau)[S_{\cR}(\tau)[u_n]].
\]
\end{proposition}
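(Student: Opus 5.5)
The strategy is a bootstrap: first bound the exact solution, then show that each subflow is at worst $1+C\tau$ expansive in the relevant Sobolev norms, and finally close the numerical bound by induction with a maximum principle supplying the $L^\infty$ control that the cubic nonlinearity needs.

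\textbf{Step 1: the exact solution.} Assume $\|u_0\|_\infty\le 1$, or more generally let $M_0:=\max(1,\|u_0\|_\infty)$. By the maximum principle for the Allen--Cahn equation, $\|u(t)\|_\infty\le M_0$ for all $t$. Next I differentiate the equation by $D^\alpha$ with $|\alpha|\le k+6$, test against $|D^\alpha u|^{p-2}D^\alpha u$ for $p<\infty$, and use that the Laplacian contributes a nonpositive term on the torus. The cubic term $D^\alpha(u^3)$ is handled by Leibniz/Moser estimates, $\|D^\alpha(u^3)\|_p\le C\|u\|_\infty^2\|u\|_{|\alpha|,p}$ together with Gagliardo--Nirenberg interpolation. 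This gives $\frac{d}{dt}\|u\|_{k+6,p}\le C(M_0)\|u\|_{k+6,p}$, and Gr\"onwall's inequality yields the first part of \eqref{eq:pde:exact_bound}. For $p=\infty$, and for the $W^{k+5,\infty}$ part, I apply the same derivative-wise maximum principle argument to $D^\alpha u$, which solves a linear parabolic equation with source built from lower derivatives. Equivalently, one can work in $L^q$ and send $q\to\infty$, since the constants above are uniform in $q$.

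\textbf{Step 2: one-step bounds for the subflows.} For $S_{\cL}(\tau)$, the heat semigroup commutes with derivatives and is an $L^q$ contraction for every $q$, so
\[
\|S_{\cL}(\tau)v\|_{m,q}\le\|v\|_{m,q}\quad\text{for all } m,q.
\]
For $S_{\cR}(\tau)$, the pointwise ODE $\dot w=w-w^3$ has an explicit solution $w=\phi_\tau(v)$ that preserves $[-M_0,M_0]$. Its derivatives satisfy $\partial_v^j\phi_\tau=\delta_{j1}+O(\tau)$ uniformly on that interval. By the chain rule (Fa\`a di Bruno), $D^\alpha\phi_\tau(v)=\phi_\tau'(v)D^\alpha v+{}$ products of lower derivatives times $O(\tau)$ coefficients. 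I then estimate these products by H\"older's inequality, putting the higher-order factor in $L^p$ and the lower ones in $L^\infty$; this is exactly why the paper's space carries $W^{k+5,\infty}$ alongside $W^{k+6,p}$. The result is
\[
\|S_{\cR}(\tau)v\|_{k+6,p}+\|S_{\cR}(\tau)v\|_{k+5,\infty}\le(1+C\tau)\bigl(\|v\|_{k+6,p}+\|v\|_{k+5,\infty}\bigr)+C\tau,
\]
with $C$ depending on $M_0$ and on an a priori bound $K$ for the lower norms.

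\textbf{Step 3: closing the bootstrap.} The constant $C$ in Step 2 must not degenerate as $n$ grows; this is the main obstacle. The way through is that the lower-order norms are themselves controlled by the same inequality at lower $m$. I will therefore induct on the derivative order $m=0,1,\dots$. At order $0$, the bound $\|u_n\|_\infty\le M_0$ holds since both subflows preserve the interval. At order $m$, the coefficient $C$ depends only on the already bounded norms of order $<m$. Iterating $x_{n+1}\le(1+C\tau)x_n+C\tau$ gives $x_n\le e^{CT}(x_0+1)$, uniformly in $n\tau\le T$ and independently of which ordering $\xi_{n+1}$ is used. The intermediate values are covered because each is obtained from $u_n$ by a single application of one or two subflows, each of which satisfies the Step 2 bound. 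Finally, $M$ is the maximum of the exact-solution and numerical constants.
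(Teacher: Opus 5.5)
Your proof is correct, and it does more than the paper does here. The paper gives no argument of its own: it cites Proposition~3.1 and Theorem~3.1 of \cite{li2025convergence}, which treat the Allen--Cahn equation with a background flow under random orderings, and notes that the advection part drops out.

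Your bootstrap is a self-contained substitute for that citation. It uses:
\begin{itemize}
\item the maximum principle at the $L^\infty$ level;
\item contractivity of $S_{\cL}(\tau)$ in every $W^{m,q}$;
\item Fa\`a di Bruno for the pointwise reaction map, with $\partial_v\phi_\tau=1+O(\tau)$ and $\partial_v^j\phi_\tau=O(\tau)$ for $j\ge2$;
\item induction on the derivative order, closed by $x_{n+1}\le(1+C\tau)x_n+C\tau$.
\end{itemize}

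The citation buys brevity and also covers the three-operator model. Your version makes explicit two points the citation leaves to the reader. First, every one-step bound is uniform over the choice of $S^{+}$ or $S^{-}$. The estimate is therefore pathwise, and it transfers verbatim from random orderings to the deterministic quasi-random one. Second, the nonlinear Fa\`a di Bruno terms at order $m$ involve only derivatives of order $<m$. This is what keeps the constants from degenerating as $n$ grows.

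Some bookkeeping points:
\begin{itemize}
\item Order the induction explicitly: first $W^{m,\infty}$ for $m=0,\dots,k+5$, and only then the top order in $L^p$. In your Step~2 display, the constant for the $W^{k+6,p}$ part depends on $\|v\|_{k+5,\infty}$, which is not a lower norm relative to the left-hand side.
\item Intermediate values such as $S_{\cR}(\tau)[u_n]$ only satisfy a bound of the form $(1+C\tau)M'+C\tau$. To get literally the same constant $M$ as in the statement, enlarge $M$, for instance by running the iteration up to time $T+2\tau$.
\item Your aside about sending $q\to\infty$ with $q$-uniform Moser/Gagliardo--Nirenberg constants is not justified, but you do not need it. Duhamel's formula for $D^\alpha u$, using the $L^\infty$-contractive heat semigroup and inducting on $|\alpha|$, already gives the $L^\infty$-based bounds for the exact solution.
\end{itemize}
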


\begin{proof}
See Proposition~3.1 and Theorem~3.1 in \cite{li2025convergence}. The present case is a simpler two-operator specialization, so we omit the details.
\end{proof}

\begin{proposition}\label{prop:pde:stability}
Let $k\in\mathbb N$ and $p\in[2,\infty]$.
Assume that $a,b\in W^{k,p}(\Omega)\cap W^{\max(k-1,0),\infty}(\Omega)$ belong to a bounded set of that space.
Then there exists a constant $C>0$, depending only on that bounded set, such that
\begin{equation}\label{eq:pde:exact_stability}
\|T(t)[a]-T(t)[b]\|_{k,p}
\le e^{Ct}\|a-b\|_{k,p},
\qquad 0\le t\le T.
\end{equation}
Moreover, for the split flows one has
\begin{equation}\label{eq:pde:split_stability}
\|S^{\pm}(\tau)[a]-S^{\pm}(\tau)[b]\|_{k,p}
\le (1+C\tau)\|a-b\|_{k,p},
\end{equation}
and
\begin{equation}\label{eq:pde:split_near_identity}
\|S^{\pm}(\tau)[a]-S^{\pm}(\tau)[b]-(a-b)\|_{k,p}
\le C\tau\|a-b\|_{k,p}.
\end{equation}
\end{proposition}

\begin{proof}
See Theorem~3.2 in \cite{li2025convergence}. The proof carries over directly after removing the advection term, so we omit it.
\end{proof}

We now identify the PDE analogue of the commutator term.
For the nonlinear operator $\cR$, its Fr\'echet derivative is
\begin{equation}\label{eq:pde:DR}
D\cR(a)h=(1-3a^2)h.
\end{equation}

\begin{lemma}\label{lem:pde:local_error}
Let $k\in\mathbb N$ and $p\in[2,\infty]$.
Assume that
\[
a\in W^{k+6,p}(\Omega)\cap W^{k+5,\infty}(\Omega).
\]
Define
\begin{equation}\label{eq:pde:F_def}
F(a):=\cL a+\cR(a),
\end{equation}
and
\begin{equation}\label{eq:pde:Phi_def}
\Phi(a):=D\cR(a)\cL a-\cL(\cR(a)).
\end{equation}
Then there exists a constant $C>0$, depending only on a bound of
\[
\|a\|_{k+6,p}+\|a\|_{k+5,\infty},
\]
such that the following expansions hold in $W^{k,p}(\Omega)$:
\begin{align}
S_{\cL}(\tau)[a]
&=
a+\tau\cL a+\frac{\tau^2}{2}\cL^2 a+r_{\cL}(\tau,a), \label{eq:pde:L_expand_compact2}\\
S_{\cR}(\tau)[a]
&=
a+\tau\cR(a)+\frac{\tau^2}{2}D\cR(a)\cR(a)+r_{\cR}(\tau,a), \label{eq:pde:R_expand_compact2}\\
T(\tau)[a]
&=
a+\tau F(a)+\frac{\tau^2}{2}DF(a)F(a)+r_T(\tau,a), \label{eq:pde:T_expand_compact2}\\
S^{\pm}(\tau)[a]
&=
a+\tau F(a)+\frac{\tau^2}{2}\Bigl(DF(a)F(a)\pm\Phi(a)\Bigr)+r_{\pm}(\tau,a), \label{eq:pde:pm_expand_compact2}
\end{align}
where
\[
DF(a)h=\cL h+D\cR(a)h,
\]
and
\begin{equation}\label{eq:pde:remainder_compact2}
\|r_{\cL}(\tau,a)\|_{k,p}
+\|r_{\cR}(\tau,a)\|_{k,p}
+\|r_T(\tau,a)\|_{k,p}
+\|r_{+}(\tau,a)\|_{k,p}
+\|r_{-}(\tau,a)\|_{k,p}
\le C\tau^3.
\end{equation}
Consequently,
\begin{equation}\label{eq:pde:local_error_pm2}
S^{\pm}(\tau)[a]-T(\tau)[a]
=
\pm \frac{\tau^2}{2}\Phi(a)+\rho_{\pm}(\tau,a),
\qquad
\|\rho_{\pm}(\tau,a)\|_{k,p}\le C\tau^3.
\end{equation}
In particular,
\begin{equation}\label{eq:pde:Phi_explicit2}
\Phi(a)=6\nu a|\nabla a|^2.
\end{equation}
\end{lemma}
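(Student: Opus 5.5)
The plan is to derive each expansion from a Taylor formula in time with integral remainder, estimating the remainders in $W^{k,p}$ via the uniform bounds of Proposition~\ref{prop:pde:bounds} on the relevant flows over $[0,\tau]$. Then we compose the expansions and read off the $\tau^2$ coefficient.

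\textbf{Step 1 (linear flow).} For $S_{\cL}(\tau)=e^{\tau\nu\Delta}$ we write
\[
S_{\cL}(\tau)a-a-\tau\cL a-\tfrac{\tau^2}{2}\cL^2a=\tfrac12\int_0^\tau(\tau-s)^2 e^{s\cL}\cL^3a\,ds .
\]
Since $e^{s\cL}$ is a contraction on $W^{k,p}$ (the heat kernel on the torus), the remainder is bounded by $C\tau^3\|a\|_{k+6,p}$. This is exactly where the six extra derivatives are consumed.

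\textbf{Step 2 (reaction flow).} The flow $w(s)=S_{\cR}(s)a$ solves the pointwise ODE $w'=w-w^3$. Then $w''=D\cR(w)\cR(w)$ and $w'''$ is a polynomial in $w$. The remainder $\frac12\int_0^\tau(\tau-s)^2w'''(s)\,ds$ is estimated in $W^{k,p}$ by the Moser/product estimate $\|fg\|_{k,p}\lesssim\|f\|_{k,p}\|g\|_{\infty}+\|f\|_{\infty}\|g\|_{k,p}$. This needs the uniform bounds of $w(s)$ in $W^{k,p}\cap W^{k,\infty}$, which hold for $s\le\tau$ by Proposition~\ref{prop:pde:bounds}.

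\textbf{Step 3 (exact flow).} For $T(\tau)a$ we use the same argument: $u'=F(u)$ and $u''=DF(u)F(u)$. The third derivative $u'''$ involves $\cL^3u$ and mixed terms such as $\cL(D\cR(u)\cL u)$ and $D^2\cR(u)[F,F]$. These contain at most six spatial derivatives of $u$ combined with products of lower derivatives. They are bounded in $W^{k,p}$ by $\sup_t(\|u\|_{k+6,p}+\|u\|_{k+5,\infty})$, again using the Moser-type estimates and Proposition~\ref{prop:pde:bounds}.

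\textbf{Step 4 (composition).} For $S^+=S_{\cR}(\tau)S_{\cL}(\tau)$ we set $b=S_{\cL}(\tau)a=a+\tau\cL a+\frac{\tau^2}{2}\cL^2a+r_{\cL}$ and insert $b$ into Step 2. By Proposition~\ref{prop:pde:bounds}, $b$ lies in the same bounded set, so the constant in Step 2 is uniform. We then Taylor-expand $\cR(b)=\cR(a)+\tau D\cR(a)\cL a+O(\tau^2)$ and $D\cR(b)\cR(b)=D\cR(a)\cR(a)+O(\tau)$; the $O(\cdot)$ terms are controlled in $W^{k,p}$ because $\cR$ is a cubic polynomial. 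Collecting terms, the $\tau^2/2$ coefficient is
\[
\cL^2a+2D\cR(a)\cL a+D\cR(a)\cR(a).
\]
Since $DF(a)F(a)=\cL^2a+\cL\cR(a)+D\cR(a)\cL a+D\cR(a)\cR(a)$, this coefficient equals $DF(a)F(a)+\Phi(a)$.

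The case $S^-$ is symmetric: we expand $\cL$ applied to $S_{\cR}(\tau)a$, which is exact since $\cL$ is linear. The coefficient becomes $\cL^2a+2\cL\cR(a)+D\cR(a)\cR(a)=DF(a)F(a)-\Phi(a)$. Subtracting the expansion from Step 3 gives the local error identity \eqref{eq:pde:local_error_pm2}.

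\textbf{Step 5 (explicit form of $\Phi$).} We compute $\cL(u^3)=\nu(3u^2\Delta u+6u|\nabla u|^2)$. Hence
\[
\Phi(a)=(1-3a^2)\nu\Delta a-\nu\Delta a+\nu\Delta(a^3)=6\nu a|\nabla a|^2 .
\]

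The main obstacle is Step 3 together with the cross terms in Step 4. There we must check that every remainder term needs no more than $k+6$ derivatives in $L^p$ and $k+5$ in $L^\infty$, so that the Moser-type estimates close. For the time-integrated remainders we also need uniform-in-$s$ regularity of the intermediate flows, which Proposition~\ref{prop:pde:bounds} provides.
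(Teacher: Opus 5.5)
Your proposal is correct and follows essentially the same route as the paper: Taylor expansion in time with integral remainder for $S_{\cL}$, $S_{\cR}$ and $T$, with the remainders controlled through the heat-semigroup bound on $\cL^3 a$ and the uniform Sobolev bounds of Proposition~\ref{prop:pde:bounds}. You then compose the subflow expansions to identify the $\tau^2/2$ coefficient as $DF(a)F(a)\pm\Phi(a)$, and use $\Delta(a^3)=3a^2\Delta a+6a|\nabla a|^2$ for the explicit form of $\Phi$, exactly as in the paper and its appendix.
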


\begin{proof}
The heat-flow expansion is immediate from the semigroup formula
\[
S_{\cL}(\tau)[a]=e^{\tau\cL}a.
\]
Indeed,
\[
S_{\cL}(\tau)[a]
=
a+\tau\cL a+\frac{\tau^2}{2}\cL^2 a+r_{\cL}(\tau,a),
\]
where
\[
r_{\cL}(\tau,a)
=
\frac12\int_0^\tau (\tau-s)^2 \cL^3S_{\cL}(s)[a]\,ds.
\]
Since $a\in W^{k+6,p}$ and the heat semigroup is bounded in Sobolev spaces, it follows that
\[
\|r_{\cL}(\tau,a)\|_{k,p}\le C\tau^3.
\]

Next, let $w(t)=S_{\cR}(t)[a]$. Then
\[
\partial_t w=\cR(w),
\qquad
\partial_t^2 w=D\cR(w)\cR(w).
\]
Taylor's formula at $t=0$ therefore gives
\[
S_{\cR}(\tau)[a]
=
a+\tau\cR(a)+\frac{\tau^2}{2}D\cR(a)\cR(a)+r_{\cR}(\tau,a),
\]
with
\[
r_{\cR}(\tau,a)
=
\frac12\int_0^\tau (\tau-s)^2 \partial_t^3 w(s)\,ds.
\]
By Proposition~\ref{prop:pde:bounds}, the reaction trajectory remains uniformly bounded in
$W^{k+6,p}\cap W^{k+5,\infty}$ on $[0,\tau]$, and this implies
\[
\|r_{\cR}(\tau,a)\|_{k,p}\le C\tau^3.
\]

For the exact flow $u(t)=T(t)[a]$, one has $\partial_t u=F(u)$, and hence
\[
\partial_t u(0)=F(a),
\qquad
\partial_t^2 u(0)=DF(a)F(a).
\]
Another Taylor expansion yields
\[
T(\tau)[a]
=
a+\tau F(a)+\frac{\tau^2}{2}DF(a)F(a)+r_T(\tau,a),
\qquad
\|r_T(\tau,a)\|_{k,p}\le C\tau^3.
\]

We now compose the two subflow expansions. Substituting \eqref{eq:pde:L_expand_compact2} into \eqref{eq:pde:R_expand_compact2}, and collecting all second-order terms, one finds
\[
S^{+}(\tau)[a]
=
a+\tau F(a)+\frac{\tau^2}{2}\bigl(DF(a)F(a)+\Phi(a)\bigr)+r_+(\tau,a).
\]
Similarly, composing in the opposite order gives
\[
S^{-}(\tau)[a]
=
a+\tau F(a)+\frac{\tau^2}{2}\bigl(DF(a)F(a)-\Phi(a)\bigr)+r_-(\tau,a).
\]
All cubic and higher-order terms are absorbed into $r_\pm(\tau,a)$, and Proposition~\ref{prop:pde:bounds} guarantees that
\[
\|r_\pm(\tau,a)\|_{k,p}\le C\tau^3.
\]
Subtracting the expansion of $T(\tau)[a]$ then yields \eqref{eq:pde:local_error_pm2}.

Finally, since
\[
D\cR(a)h=(1-3a^2)h,
\qquad
\cL=\nu\Delta,
\]
we compute
\[
\Phi(a)
=
\nu(1-3a^2)\Delta a-\nu\Delta(a-a^3).
\]
Using
\[
\Delta(a^3)=3a^2\Delta a+6a|\nabla a|^2,
\]
we conclude that
\[
\Phi(a)=6\nu a|\nabla a|^2.
\]
This completes the proof.
\end{proof}

\begin{proposition}\label{prop:pde:verification}
For the Allen--Cahn problem \eqref{eq:pde:model}, the assumptions of
Theorem~\ref{thm:general:abstract_convergence} are satisfied with
\[
X=W^{k,p}(\Omega),
\qquad
Y=W^{k+6,p}(\Omega)\cap W^{k+5,\infty}(\Omega),
\]
and
\[
\Phi(a)=D\cR(a)\cL a-\cL(\cR(a))=6\nu a|\nabla a|^2.
\]
\end{proposition}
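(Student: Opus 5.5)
The plan is to check the three hypotheses of Theorem~\ref{thm:general:abstract_convergence} in the order the template lists them, using Propositions~\ref{prop:pde:bounds} and \ref{prop:pde:stability} and Lemma~\ref{lem:pde:local_error}. The a priori bound on the exact solution in $Y$ is exactly \eqref{eq:pde:exact_bound}. The local defect (i) is \eqref{eq:pde:local_error_pm2} with $a=u(t_n)\in Y$. Since $\|u(t_n)\|_Y\le M$ uniformly, the constant in the remainder bound is uniform in $n$.

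For (ii), the nonlinearity prevents us from simply taking $E_n$ to be the scheme itself. Instead I would linearize. Set $\Theta_n:=S^{\xi_{n+1}}(\tau)$, and define
\[
E_n:=\int_0^1 D\Theta_n\bigl(u(t_n)+s e_n\bigr)\,ds ,
\]
so that $\Theta_n[u_n]-\Theta_n[u(t_n)]=E_ne_n$ exactly and $\rho_n=0$. Along the segment between $u(t_n)$ and $u_n$, both endpoints are bounded in $Y$ by \eqref{eq:pde:numerical_bound}. Hence the segment stays in a bounded subset of $W^{k,p}\cap W^{\max(k-1,0),\infty}$, where Proposition~\ref{prop:pde:stability} applies.

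The derivative of each split flow is concrete:
\begin{itemize}
\item $DS_{\cL}(\tau)=e^{\tau\cL}$, a contraction on $W^{k,p}$ with $\|e^{\tau\cL}-I\|\le C\tau$ in the required sense;
\item $DS_{\cR}(\tau)[w]$ is multiplication by $\partial_{w}S_\cR$, a smooth function of $w$ equal to $1+O(\tau)$ in $W^{\max(k-1,0),\infty}$.
\end{itemize}
From these, $\|E_n\|\le 1+C\tau$ follows, which is the infinitesimal form of \eqref{eq:pde:split_stability}.

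The delicate point is $\|E_n-I\|_{\mathcal L(X)}\le C\tau$. It fails for $e^{\tau\nu\Delta}-I$ on $W^{k,p}$, because $\|e^{\tau\Delta}-I\|_{\mathcal L(W^{k,p})}$ is not $O(\tau)$. So I would retreat to what the template actually uses. In its proof, the bound $\|E_n-I\|$ enters only through \eqref{eq:general:g_lip}, i.e. through $(I-E_{k+1})\Phi(u(t_k))$. That vector is smooth: $\Phi(u)=6\nu u|\nabla u|^2$ lies in $W^{k+2,p}$ by the $Y$ bounds. For smooth vectors, $\|(e^{\tau\cL}-I)v\|_{k,p}\le \tau\|\cL v\|_{k,p}$. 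I would therefore state (ii) with \eqref{eq:pde:split_near_identity} interpreted on such regular arguments, and note that the template's proof goes through unchanged. This step, and the choice of norms it requires, is where I expect the main obstacle. If this reading is not acceptable, the alternative is to take $E_n$ as the pure heat propagator and put the reaction linearization into $\rho_n$, which satisfies $\|\rho_n\|\le C\tau\|e_n\|$.

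For (iii), the bound \eqref{eq:general:g_bound} follows from $\|G_{n,k}\|\le e^{CT}$ and $\|\Phi(u(t_k))\|_{k,p}\le C(M)$, which holds because $W^{k,p}\cap W^{k+5,\infty}$ is an algebra-type estimate for the cubic term. For \eqref{eq:general:g_lip}, I would write
\[
g_n(t_{k+1})-g_n(t_k)=G_{n,k+1}\Bigl(\Phi(u(t_{k+1}))-\Phi(u(t_k))+(I-E_{k+1})\Phi(u(t_k))\Bigr).
\]
The first difference is $O(\tau)$ because $\partial_t u=\nu\Delta u+u-u^3$ is bounded in $W^{k+4,p}$, so $u$ is Lipschitz in time into $W^{k+1,p}\cap W^{1,\infty}$, and $\Phi$ is locally Lipschitz there into $W^{k,p}$. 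The second term is $O(\tau)$ by the smooth-vector estimate above, applied to the heat part and to the multiplicative reaction part. Collecting the three hypotheses completes the verification.
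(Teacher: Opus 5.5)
Your route differs from the paper's, and your central objection is correct. In fact it undercuts the paper's own argument. The paper takes $E_n:=I$ and $\rho_n:=S^{\xi_{n+1}}(\tau)[u_n]-S^{\xi_{n+1}}(\tau)[u(t_n)]-e_n$, and bounds $\rho_n$ by \eqref{eq:pde:split_near_identity}. Then (iii) is immediate, since $G_{n,k}=I$ and $g_n(t_k)=\Phi(u(t_k))$. But \eqref{eq:pde:split_near_identity} fails as stated, for exactly your reason. Take $k=0$, $b=0$ and $a=m^{-6}\sin(2\pi m x_1/L)$, which stays bounded even in $Y$. Then $S_\cL(\tau)a=e^{-\lambda_m\tau}a$ with $\lambda_m=4\pi^2\nu m^2/L^2$, and $|S_\cR(\tau)[w]|\le e^{\tau}|w|$ pointwise. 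Hence $\|S^{+}(\tau)[a]-a\|_{0,p}\ge(1-e^{\tau-\lambda_m\tau})\|a\|_{0,p}$, which is not $O(\tau)\|a\|_{0,p}$ when $m^2\tau\to\infty$. Worse, any $E_n$ satisfying (ii) as written gives $\|S^{\xi_{n+1}}(\tau)[u_n]-S^{\xi_{n+1}}(\tau)[u(t_n)]-e_n\|_{k,p}\le\|(E_n-I)e_n\|_{k,p}+\|\rho_n\|_{k,p}\le 2C\tau\|e_n\|_{k,p}$. So verifying (ii) literally is equivalent to the near-identity bound for the actual pair $(u_n,u(t_n))$, and by the example this does not follow from $Y$-bounds. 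The paper's choice $E_n=I$ does not avoid the difficulty; it only moves it into $\rho_n$. Neither argument establishes the proposition in its literal form.

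Your repair is the right one, and it can be stated more cleanly. The proof of Theorem~\ref{thm:general:abstract_convergence} never uses $\|E_n-I\|_{\mathcal L(X)}\le C\tau$. Only $\|E_n\|_{\mathcal L(X)}\le1+C\tau$ (to get $\|G_{n,k}\|\le e^{CT}$), $\|\rho_n\|_X\le C\tau\|e_n\|_X$, and (iii) enter. Since \eqref{eq:general:g_lip} is itself a hypothesis there, the bound on $E_n-I$ is only a tool for verifying it, not something the proof uses, so simply delete it from (ii). Your exact linearization ($\rho_n=0$) then keeps the non-small heat propagator inside $E_n$, where only $\|E_n\|\le1+C\tau$ is required. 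You pay for $E_{k+1}-I$ only on the smooth vector $\Phi(u(t_k))$, via $\|(e^{\tau\cL}-I)v\|_{k,p}\le\tau\|\cL v\|_{k,p}$. This gives (iii) and hence Theorem~\ref{thm:pde:convergence}.

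Two corrections. First, your fallback of taking $E_n$ to be the heat propagator does not rescue the literal hypothesis, since $\|e^{\tau\cL}-I\|_{\mathcal L(W^{k,p})}$ is not small either. It is merely another admissible choice under the weakened template, with $G_{n,k}=e^{(n-k)\tau\cL}$ and the same smooth-vector estimate. Second, the multiplier bounds $\|E_n\|\le1+C\tau$ and $\|(\partial_wS_\cR-1)v\|_{k,p}\le C\tau\|v\|_{k,p}$ on $W^{k,p}$ need $\partial_wS_\cR$ controlled in $W^{k,\infty}$, not $W^{\max(k-1,0),\infty}$. This is available because $u_n$ and $u(t_n)$ are bounded in $Y$.
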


\begin{proof}
By Proposition~\ref{prop:pde:bounds}, the exact solution is uniformly bounded in $Y$ on $[0,T]$.
Thus the a priori assumption in the template holds.

Property (i) in Theorem~\ref{thm:general:abstract_convergence} follows immediately from
Lemma~\ref{lem:pde:local_error}, which gives
\[
S^{\pm}(\tau)[a]-T(\tau)[a]
=
\pm \frac{\tau^2}{2}\Phi(a)+\rho_{\pm}(\tau,a),
\qquad
\|\rho_{\pm}(\tau,a)\|_{k,p}\le C\tau^3.
\]
This is exactly \eqref{eq:general:local_pm}--\eqref{eq:general:local_remainder}.

For property (ii), Proposition~\ref{prop:pde:stability} implies that on bounded sets
\[
\|S^{\xi_{n+1}}(\tau)[u_n]-S^{\xi_{n+1}}(\tau)[u(t_n)]\|_{k,p}
\le (1+C\tau)\|e_n\|_{k,p},
\]
and
\[
\|S^{\xi_{n+1}}(\tau)[u_n]-S^{\xi_{n+1}}(\tau)[u(t_n)]-e_n\|_{k,p}
\le C\tau\|e_n\|_{k,p}.
\]
Hence we may take $E_n:=I$ and
\[
\rho_n:=S^{\xi_{n+1}}(\tau)[u_n]-S^{\xi_{n+1}}(\tau)[u(t_n)]-e_n.
\]
Then \eqref{eq:general:stability}, \eqref{eq:general:En_bounds}, and \eqref{eq:general:stability_remainder} follow immediately.

With this choice, $G_{n,k}=I$ for all $0\le k\le n$, and therefore
\[
g_n(t_k)=\Phi(u(t_k)).
\]
By Proposition~\ref{prop:pde:bounds} and the explicit form of $\Phi$ in \eqref{eq:pde:Phi_def}, one has
\[
\|g_n(t_k)\|_{k,p}=\|\Phi(u(t_k))\|_{k,p}\le C,
\]
which proves \eqref{eq:general:g_bound}.

It remains to verify \eqref{eq:general:g_lip}. Since $u$ is smooth on $[0,T]$ and $\Phi$
is smooth on bounded subsets of $W^{k+6,p}(\Omega)\cap W^{k+5,\infty}(\Omega)$, one has
\[
\|\Phi(u(t_{k+1}))-\Phi(u(t_k))\|_{k,p}\le C|t_{k+1}-t_k|=C\tau.
\]
Hence
\[
\|g_n(t_{k+1})-g_n(t_k)\|_{k,p}
=
\|\Phi(u(t_{k+1}))-\Phi(u(t_k))\|_{k,p}
\le C\tau,
\]
which proves \eqref{eq:general:g_lip}. Therefore all assumptions of
Theorem~\ref{thm:general:abstract_convergence} are satisfied.
\end{proof}

\begin{theorem}\label{thm:pde:convergence}
Let $k\in\mathbb N$, $p\in[2,\infty]$, and assume that
\[
u_0\in W^{k+6,p}(\Omega)\cap W^{k+5,\infty}(\Omega).
\]
Let $u$ be the exact solution of \eqref{eq:pde:model}, and let $\{u_n\}_{n=0}^N$ be generated by the quasi-random splitting scheme.
Then there exist constants $N_0\in\mathbb N$ and $C>0$, independent of $N$, such that for all $N\ge N_0$,
\[
\max_{0\le n\le N}\|u_n-u(t_n)\|_{k,p}
\le C\tau^2\log N.
\]
Equivalently,
\[
\max_{0\le n\le N}\|u_n-u(t_n)\|_{k,p}
\le C\frac{\log N}{N^2}.
\]
\end{theorem}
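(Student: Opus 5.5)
The proof will be a direct application of the common error template, Theorem~\ref{thm:general:abstract_convergence}, with all hypotheses supplied by Proposition~\ref{prop:pde:verification}. The structure parallels the bounded linear case (Theorem~\ref{thm:ode:convergence}). The extra care needed comes from the nonlinearity: the stability estimates of Proposition~\ref{prop:pde:stability} hold only for arguments in a bounded subset of $W^{k,p}\cap W^{\max(k-1,0),\infty}$.

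\textbf{Step 1: a priori bounds.} I first invoke Proposition~\ref{prop:pde:bounds} with the given regularity $u_0\in W^{k+6,p}\cap W^{k+5,\infty}$. This gives a single constant $M$ bounding
\begin{itemize}
\item the exact solution $u(t_n)$,
\item the numerical iterates $u_n$,
\item and all intermediate stages $S_{\cL}(\tau)[u_n]$, $S_{\cR}(\tau)[u_n]$, and so on,
\end{itemize}
in $Y=W^{k+6,p}\cap W^{k+5,\infty}$, uniformly in $n\tau\le T$. Since $Y$ embeds continuously into $W^{k,p}\cap W^{\max(k-1,0),\infty}$, both $u_n$ and $u(t_n)$ lie in one fixed bounded set $\mathcal B$. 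The constants in Proposition~\ref{prop:pde:stability} and Lemma~\ref{lem:pde:local_error} then depend only on $M$, and hence not on $N$ or $n$.

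\textbf{Step 2: verification of (i)--(iii).} On $\mathcal B$, Proposition~\ref{prop:pde:verification} yields the three hypotheses of the template.
\begin{itemize}
\item For the local defect (i), I apply Lemma~\ref{lem:pde:local_error} at $a=u(t_n)$, with $\Phi(a)=6\nu a|\nabla a|^2$ and $\|\rho_\pm\|_{k,p}\le C\tau^3$.
\item For the one-step propagation (ii), I take $E_n=I$ and absorb the deviation from the identity into $\rho_n$, using \eqref{eq:pde:split_near_identity}. This gives $\|\rho_n\|_{k,p}\le C\tau\|e_n\|_{k,p}$.
\item For the regularity condition (iii), $g_n(t_k)=\Phi(u(t_k))$. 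Boundedness follows from the Sobolev bound on $u$ together with the algebra property of $W^{k+1,p}\cap W^{k+1,\infty}$. The grid Lipschitz bound follows from $\|\partial_t u\|_{k+4,p}\le C$, which is implied by \eqref{eq:pde:exact_bound} via $\partial_t u=\nu\Delta u+u-u^3$, combined with local Lipschitz continuity of $\Phi$ from $W^{k+2,p}\cap W^{k+1,\infty}$ into $W^{k,p}$.
\end{itemize}

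\textbf{Step 3: conclusion.} Theorem~\ref{thm:general:abstract_convergence} now gives $\max_n\|e_n\|_{k,p}\le C\tau^2\log N$. Since $\tau=T/N$, this is equivalent to the bound $C\log N/N^2$.

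\textbf{Main obstacle.} The delicate point is the logical order in Step 1. The stability constant in (ii) requires that the numerical solution already stay in $\mathcal B$. If one did not have \eqref{eq:pde:numerical_bound} unconditionally, one would need a bootstrap: assume $\|e_j\|$ is small for $j\le n$, deduce that $u_j\in\mathcal B$, and close the induction using the error bound $C\tau^2\log N\to0$ for $N\ge N_0$. This is where the threshold $N_0$ enters. Proposition~\ref{prop:pde:bounds} is stated uniformly for the scheme, so I would rely on it directly and keep the bootstrap only as a fallback remark.
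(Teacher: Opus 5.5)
Your proposal is correct and matches the paper's proof, which obtains Theorem~\ref{thm:pde:convergence} as an immediate consequence of Proposition~\ref{prop:pde:verification} fed into the common error template, Theorem~\ref{thm:general:abstract_convergence}. That verification uses the same choices you make: the a priori bounds of Proposition~\ref{prop:pde:bounds}, $E_n=I$ with the near-identity estimate absorbed into $\rho_n$, and $g_n(t_k)=\Phi(u(t_k))$ with $\Phi(a)=6\nu a|\nabla a|^2$. Your extra remarks on keeping the iterates in the bounded set required by Proposition~\ref{prop:pde:stability}, and on the time-Lipschitz bound for $\Phi(u(t))$, only spell out what the paper leaves implicit.
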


\begin{proof}
The conclusion is an immediate consequence of Proposition~\ref{prop:pde:verification} and
Theorem~\ref{thm:general:abstract_convergence}.
\end{proof}

\section{Numerical experiments}\label{sec:numerics}

In this section we present numerical experiments to verify the convergence behavior predicted by the analysis and to compare the quasi-random splitting method with the corresponding randomized splitting method.  For the randomized splitting method, the reported errors are computed from repeated independent runs.  For the quasi-random splitting method, once the quasi-random sequence is fixed, the method is deterministic, and therefore only {\bf one run} is needed.

\subsection{Experiments for bounded linear systems}

% We first consider the linear ODE model
% \begin{equation}\label{eq:num:ode_model}
% \frac{\mathrm d}{\mathrm dt}u(t)=\sum_{j=1}^{p}A_j u(t),
% \qquad
% u(0)=u_0,
% \end{equation}
% where $A_1,\dots,A_p\in\mathbb R^{m\times m}$ are noncommuting matrices. 
% Although the convergence analysis in Section~\ref{sec:ode} is carried out for the two-operator case, Algorithm~\ref{Alg:qmc} applies without modification to any number of operators $p\ge2$. 
We first consider a finite-dimensional realization of the bounded linear problem,
\[
\frac{d}{dt}u(t)=\sum_{j=1}^p A_j u(t), \qquad u(0)=u_0,
\]
where \(u(t)\in \mathbb{R}^m\) and \(A_1,\dots,A_p \in \mathbb{R}^{m\times m}\) are noncommuting bounded linear operators. Although the convergence analysis in Section 4.2 is carried out for the two-operator bounded linear case, Algorithm 1 applies without modification to any number of operators \(p\ge2\). In the numerical experiments we therefore also test the multi-operator setting.

In these bounded-linear experiments we fix $m=20$,
$p=3$, $T=1$.
The initial vector $u_0\in\mathbb R^m$ is generated randomly and then normalized so that
$\|u_0\|_2=1$.

The matrices $A_1,\dots,A_p$ are generated as follows. 
Each $A_j$ is sampled as a dense random matrix with independent standard Gaussian entries and then rescaled by
\[
A_j\leftarrow \frac{A_j}{\max\{1,\|A_j\|_2\}},
\qquad j=1,\dots,p.
\]
To ensure that the splitting problem is genuinely noncommutative, we reject the sample and redraw the matrices whenever
\[
\max_{1\le i<j\le p}\|[A_i,A_j]\|_F<10^{-8}.
\]
The exact solution is given by
$u(t)=\exp\!\Bigl(t\sum_{j=1}^{p}A_j\Bigr)u_0$.
The matrix exponential here and in each split subflow is evaluated by a standard matrix exponential routine.

For the quasi-random splitting method, we take the quasi-random sequence in Algorithm~\ref{Alg:qmc} to be the radical-inverse sequence
\[
q_n=\varphi_2(n),
\qquad n\ge1,
\]
that is, the one-dimensional quasi-random sequence introduced in \ref{eq:prelim:qmc_sequence} with base $R=2$. 
At each time step, a permutation of $\{1,\dots,p\}$ is generated from the consecutive block of $p-1$ quasi-random numbers through the Fisher--Yates shuffle described in Section~\ref{sec:setup}. 
For comparison, we also consider the randomized splitting method, in which the Fisher--Yates shuffle is driven by independent samples from the uniform distribution on $[0,1]$.

We test the time steps
\begin{equation}\label{eq:num:ode_steps}
\tau=2^{-q},
\qquad q=4,5,6,7,8.
\end{equation}
For a given time step $\tau$, let $N=T/\tau$.

For the quasi-random splitting method, let
$\{u_n^{\mathrm{qr}}\}$ be the numerical solution, and define the error sequence
\[
\varepsilon_n^{\mathrm{qr}}:=u_n^{\mathrm{qr}}-u(t_n).
\]
The global-in-time error is measured by
\begin{equation}\label{eq:num:ode_qr_error}
\mathcal E_{\mathrm{qr}}^{\mathrm{lin}}(\tau)
:=
\max_{0\le n\le N}\|\varepsilon_n^{\mathrm{qr}}\|_2.
\end{equation}

For the randomized splitting method, we perform $N_E=10^3$ independent runs. 
Let
\[
u_n^{(\ell)},
\qquad
\ell=1,\dots,N_E,
\qquad
n=0,1,\dots,N,
\]
denote the numerical solution from the $\ell$-th run, and define
\[
\varepsilon_n^{(\ell)}:=u_n^{(\ell)}-u(t_n).
\]
The corresponding empirical mean error is defined by
\begin{equation}\label{eq:num:ode_rand_error}
\mathcal E_{\mathrm{rand}}^{\mathrm{lin}}(\tau)
:=
\max_{0\le n\le N}
\frac{1}{N_E}\sum_{\ell=1}^{N_E}\|\varepsilon_n^{(\ell)}\|_2.
\end{equation}

In the bounded linear operators figures, we compare the deterministic quasi-random error $\mathcal E_{\mathrm{qr}}^{\mathrm{lin}}(\tau)$ with the empirical mean error $\mathcal E_{\mathrm{rand}}^{\mathrm{lin}}(\tau)$ of the randomized splitting method. 
All curves are plotted against $\tau$ in log--log scale, together with reference lines of slopes $1$, $1.5$, and $2$.

\begin{figure}[!ht]
\centering
\includegraphics[width=\textwidth]{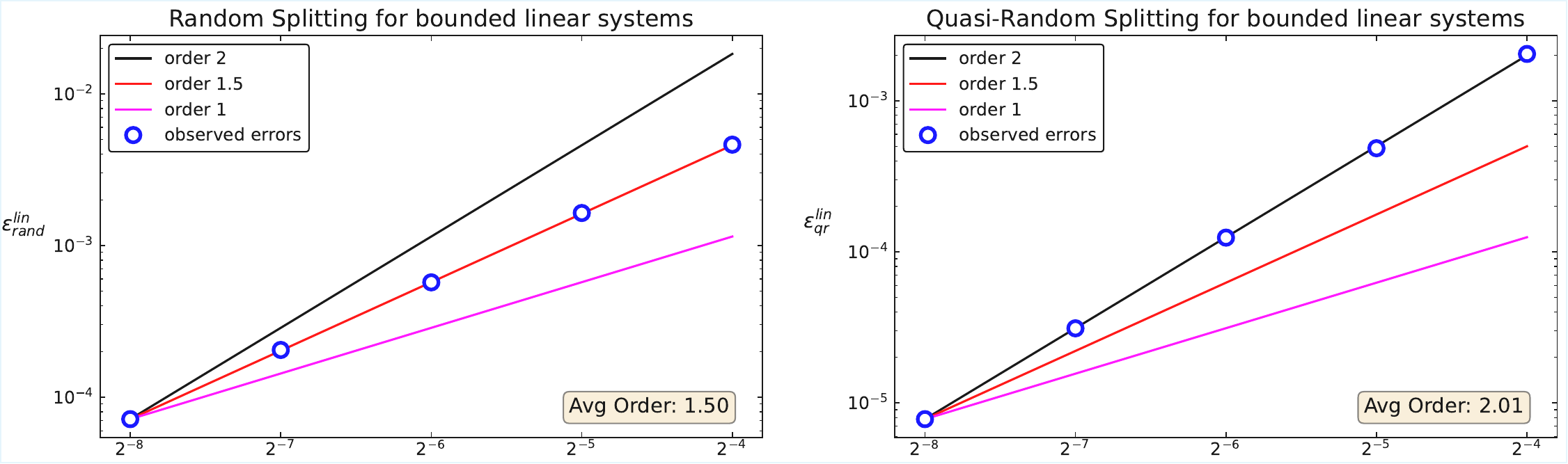}
\caption{Comparison of the convergence behavior for the bounded linear operators in the discrete $L^2$ norm. The quasi-random splitting method is compared with the randomized splitting method for time steps $\tau=2^{-4},2^{-5},\dots,2^{-8}$.}
\label{fig:exoer}
\end{figure}

% We also performed an additional supplementary test for the quasi-random splitting method with multiple operators, taking
% \[
% p=2,3,4,5,6,7.
% \]
% The corresponding errors are shown in Figure~\ref{fig:ode_multi_p_qr}.
% Although the rigorous analysis in Section~\ref{sec:ode} is restricted to the two-operator case, the numerical results indicate that the quasi-random splitting method still exhibits stable near-second-order behavior for several larger values of $p$.

% \begin{figure}[!ht]
% \centering
% \includegraphics[width=\textwidth]{main_1d_interval_perm_sobol_T_2-7_1106.pdf}
% \caption{Supplementary bounded-linear experiment for the quasi-random splitting method with $p=2,3,4,5,6,7$.}
% \label{fig:ode_multi_p_qr}
% \end{figure}

\subsection{Experiments for the Allen--Cahn equation}

In this subsection we present two sets of PDE experiments.
The first one concerns the two-operator diffusion--reaction Allen--Cahn equation and is included to validate the convergence behavior predicted by the analysis in Section~\ref{sec:pde}.
The second one considers the three-operator Allen--Cahn equation with a background flow.
Although the present theory is restricted to the two-operator case, this additional experiment is included to illustrate the behavior of the quasi-random splitting method in a more general multi-operator setting.

\subsubsection{Two-operator diffusion--reaction test}

We first consider the Allen--Cahn equation
\begin{equation}\label{eq:num:pde_model}
\partial_t u=\cL u+\cR(u)=\nu\Delta u-f(u),
\qquad
f(u)=u^3-u,
\end{equation}
posed on the two-dimensional torus
\[
\Omega=\mathbb T^2=[0,L]^2
\]
with periodic boundary conditions.
Following the spatial-temporal setting used in the reference numerical study, but adapted to the present two-operator Allen--Cahn model, we take
\begin{equation}\label{eq:num:pde_parameters}
L=2\pi,
\qquad
\nu=1,
\qquad
T=1,
\end{equation}
and the initial value
\begin{equation}\label{eq:num:pde_initial}
u_0(x,y)=1+0.5\sin x+\exp(0.7\sin y).
\end{equation}

The spatial discretization is carried out by the Fourier spectral method.
More precisely, we use the uniform mesh
\begin{equation}\label{eq:num:pde_mesh}
\Delta x=\Delta y=h:=\pi\,2^{-7},
\end{equation}
and all spatial derivatives are computed spectrally through the discrete Fourier transform.
This choice makes the spatial error negligible compared with the temporal splitting error on the range of time steps considered below.

For the two subproblems in the splitting method, the linear heat flow generated by $\cL$ is solved exactly in Fourier space:
\[
\widehat{S_{\cL}(\tau)w}(k)
=
\exp\!\Bigl(-4\pi^2\nu |k|^2\tau/L^2\Bigr)\hat w(k).
\]
The reaction flow generated by $\cR$ is solved pointwise in physical space by the explicit formula
\begin{equation}\label{eq:num:pde_reaction}
S_{\cR}(\tau)[w]
=
\frac{w}{\sqrt{w^2+(1-w^2)e^{-2\tau}}}.
\end{equation}
Hence the only numerical error in the splitting method comes from the splitting itself.

The reference solution is computed by the same Fourier spectral discretization in space together with a sufficiently fine temporal discretization, namely
\begin{equation}\label{eq:num:pde_reference_step}
\tau_{\rm ref}=2^{-20}.
\end{equation}
For the splitting schemes, we test the time steps
\begin{equation}\label{eq:num:pde_steps}
\tau=2^{-m},
\qquad m=10,11,...,15.
\end{equation}

Let $u^{{\rm qr}}_{n,ij}$ be the numerical solution produced by the quasi-random splitting method at the grid point $(x_i,y_j)=(ih,jh)$ and time $t_n=n\tau$, and let
\[
e^{{\rm qr}}_{n,ij}:=u^{{\rm qr}}_{n,ij}-u(x_i,y_j,t_n)
\]
be the corresponding pointwise error.
We measure the error in the discrete $L^2$ norm and the discrete $W^{1,2}$ norm.
More precisely, we define
\begin{equation}\label{eq:num:pde_l2_error}
\mathcal E^{h}_{2,{\rm qr}}(\tau)
:=
\max_{0\le n\le N}
\left(
\sum_{i,j}|e^{{\rm qr}}_{n,ij}|^2\,\Delta x\Delta y
\right)^{1/2},
\end{equation}
and
\begin{equation}\label{eq:num:pde_h1_error}
\mathcal E^{h}_{1,2,{\rm qr}}(\tau)
:=
\max_{0\le n\le N}
\left(
\sum_{i,j}\Bigl(|e^{{\rm qr}}_{n,ij}|^2+|D e^{{\rm qr}}_{n,ij}|^2\Bigr)\,\Delta x\Delta y
\right)^{1/2},
\end{equation}
where the derivative $D$ is approximated spectrally in Fourier space. The same discrete norms are used for the randomized splitting method.

For the randomized splitting method, we repeat the experiment independently $N_E=10^3$ times and report the empirical mean error
\begin{equation}\label{eq:num:pde_rand_l2_error}
\mathcal E^{h}_{2,{\rm rand}}(\tau)
:=
\max_{0\le n\le N}\frac{1}{N_E}\sum_{\ell=1}^{N_E}
\left(
\sum_{i,j}|e^{(\ell)}_{n,ij}|^2\,\Delta x\Delta y
\right)^{1/2},
\end{equation}
and
\begin{equation}\label{eq:num:pde_rand_h1_error}
\mathcal E^{h}_{1,2,{\rm rand}}(\tau)
:=
\max_{0\le n\le N}\frac{1}{N_E}\sum_{\ell=1}^{N_E}
\left(
\sum_{i,j}\Bigl(|e^{(\ell)}_{n,ij}|^2+|D e^{(\ell)}_{n,ij}|^2\Bigr)\,\Delta x\Delta y
\right)^{1/2}.
\end{equation}

Figure~\ref{fig:pde_two_l2} and Figure~\ref{fig:pde_two_h12} display the convergence behavior of the two-operator diffusion--reaction test in the discrete $L^2$ and $W^{1,2}$ norms, respectively.
The quasi-random splitting method is deterministic once the underlying sequence is fixed, so only one run is needed.
The results show that the quasi-random splitting method exhibits an essentially second-order convergence behavior, consistent with the theory, while the randomized splitting method displays a visibly slower averaged convergence rate.

\begin{figure}[!ht]
\centering
\includegraphics[width=\textwidth]{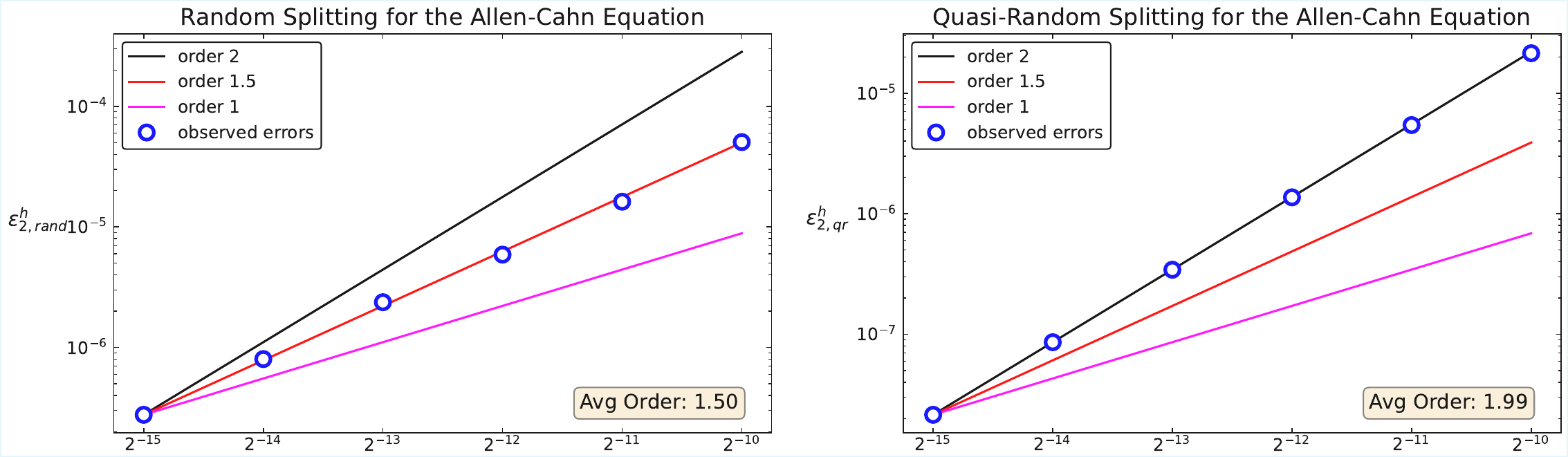}
\caption{Comparison of the convergence behavior for the Allen--Cahn equation in the discrete $L^2$ norm. The quasi-random splitting method is compared with the randomized splitting method for time steps $\tau=2^{-10},2^{-11},\dots,2^{-15}$.}
\label{fig:pde_two_l2}
\end{figure}

\begin{figure}[!ht]
\centering
\includegraphics[width=\textwidth]{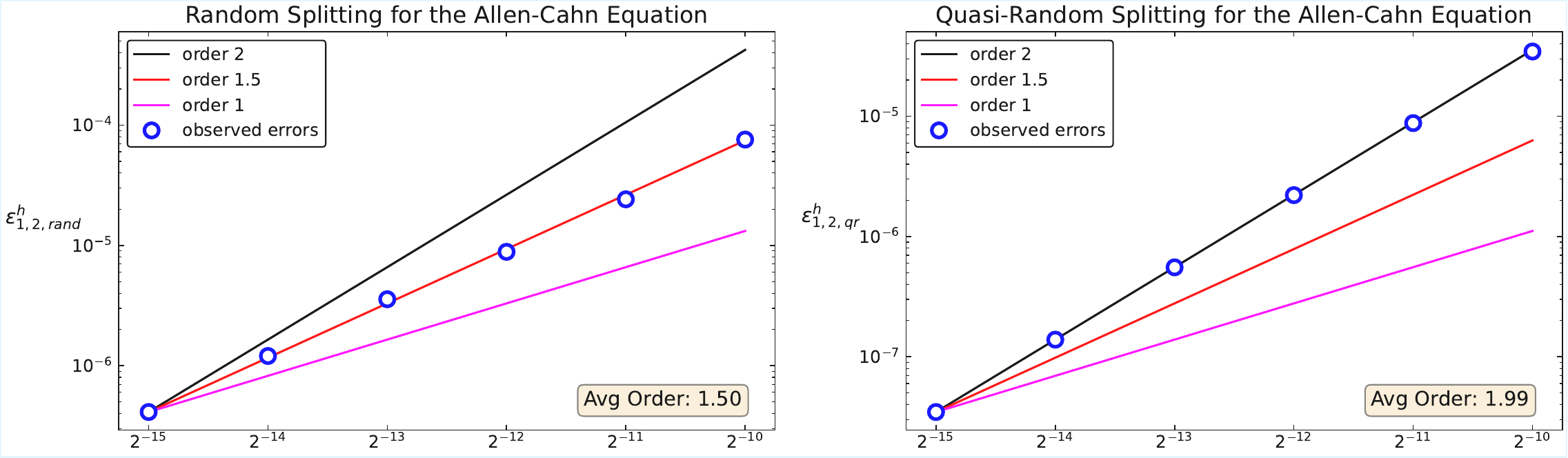}
\caption{Comparison of the convergence behavior for the Allen--Cahn equation in the discrete $W^{1,2}$ norm. The quasi-random splitting method is compared with the randomized splitting method for time steps $\tau=2^{-10},2^{-11},\dots,2^{-15}$.}
\label{fig:pde_two_h12}
\end{figure}

\subsubsection{Three-operator test with background flow}

We next consider the Allen--Cahn equation with a background flow
\begin{equation}\label{eq:num:pde_model_three}
\partial_t u + v(x)\cdot \nabla u = \nu \Delta u - f(u),
\qquad
f(u)=u^3-u.
\end{equation}
We keep exactly the same domain, parameters, initial value, spatial discretization, and reference time step as in
\eqref{eq:num:pde_parameters}--\eqref{eq:num:pde_reference_step}.
Thus the only new ingredient is the advection operator
\[
\cA u:=-v(x)\cdot\nabla u,
\]
for which we take the same shear flow as in the reference three-operator experiment,
\begin{equation}\label{eq:num:pde_velocity}
v(x,y)=(-0.75\sin y,\,0).
\end{equation}

The three subflows are then given by advection, diffusion, and reaction.
The diffusion step is again solved exactly in Fourier space, and the reaction step is still given by \eqref{eq:num:pde_reaction}.
For the advection step, we advance the transport equation
\[
\partial_t w + v(x)\cdot \nabla w = 0
\]
by the classical fourth-order Runge--Kutta method in physical space.
The reference solution is computed by the same exponential Runge--Kutta solver as above, now applied to the full three-operator equation.

We use the same family of time steps as in \eqref{eq:num:pde_steps}.
Since the quasi-random splitting method is deterministic once the quasi-random sequence is fixed, we report only the single-run errors in the two norms.
Let $u^{{\rm qr},3}_{n,ij}$ be the three-operator quasi-random splitting solution and define
\[
e^{{\rm qr},3}_{n,ij}:=u^{{\rm qr},3}_{n,ij}-u(x_i,y_j,t_n).
\]
The corresponding discrete $L^2$ and $W^{1,2}$ errors are defined by
\begin{equation}\label{eq:num:pde_three_l2_error}
\mathcal E^{h,3}_{2,{\rm qr}}(\tau)
:=
\max_{0\le n\le N}
\left(
\sum_{i,j}|e^{{\rm qr},3}_{n,ij}|^2\,\Delta x\Delta y
\right)^{1/2},
\end{equation}
and
\begin{equation}\label{eq:num:pde_three_h1_error}
\mathcal E^{h,3}_{1,2,{\rm qr}}(\tau)
:=
\max_{0\le n\le N}
\left(
\sum_{i,j}\Bigl(|e^{{\rm qr},3}_{n,ij}|^2+|D e^{{\rm qr},3}_{n,ij}|^2\Bigr)\,\Delta x\Delta y
\right)^{1/2}.
\end{equation}

Figure~\ref{fig:pde_three} shows the convergence behavior of the quasi-random splitting method for the three-operator problem in the discrete $L^2$ and $W^{1,2}$ norms.
Although this experiment is beyond the two-operator theory established in Section~\ref{sec:pde}, the numerical results indicate that the quasi-random ordering strategy remains effective in the more general multi-operator setting.

\begin{figure}[!ht]
\centering
\includegraphics[width=\textwidth]{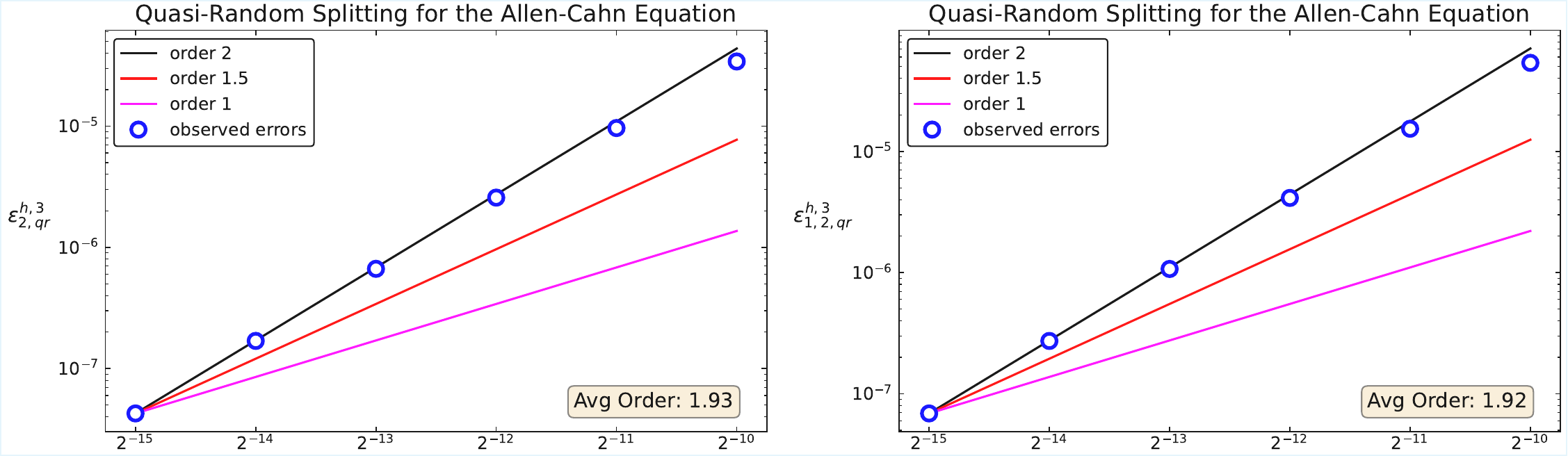}
\caption{Three-operator Allen--Cahn equation with background flow: convergence of the quasi-random splitting method in the discrete $L^2$ norm and $W^{1,2}$ norm for $\tau=2^{-10},2^{-11},\dots,2^{-15}$. The reported quantity is the deterministic error $\mathcal E^{h,3}_{2,{\rm qr}}(\tau)$ and $\mathcal E^{h,3}_{1,2,{\rm qr}}(\tau)$.}
\label{fig:pde_three}
\end{figure}

\section{Conclusion}\label{sec:conclusion}

We proposed a quasi-random operator splitting method in which low-discrepancy sequences are used to generate the ordering of the subflows while keeping the low per-step cost of Lie-type splitting. For the two-operator case, we developed a convergence framework showing how the induced sign sequence produces cancellation in the accumulated local errors. This yields an essentially second-order global error bound of order \(O(\tau^2 |\log \tau|)\) for both bounded linear problems and the Allen--Cahn equation. The numerical experiments are consistent with the theory and show that the proposed method achieves near-Strang accuracy in a single deterministic run.

Several directions remain for future work. A natural next step is to extend the rigorous analysis to the general \(p\)-operator case. It is also of interest to investigate whether sharper rates can be obtained under stronger structural assumptions, and to apply the quasi-random ordering strategy to broader classes of nonlinear and multiphysics PDEs.
\bibliographystyle{unsrt}
\bibliography{references}

@article{jin2022,
  title={On the mean field limit of the {Random Batch Method} for interacting particle systems},
  author={Jin, Shi and Li, Lei},
  journal={Sci. China Math.},
  volume={65},
  number={1},
  pages={169--202},
  year={2022},
  publisher={Springer}
}

@article{jin2021convergence,
  title={Convergence of the random batch method for interacting particles with disparate species and weights},
  author={Jin, Shi and Li, Lei and Liu, Jian-Guo},
  journal={SIAM J. Numer. Anal.},
  volume={59},
  number={2},
  pages={746--768},
  year={2021},
  publisher={SIAM}
}

@incollection{jin2022random,
  title={Random batch methods for classical and quantum interacting particle systems and statistical samplings},
  author={Jin, Shi and Li, Lei},
  booktitle={Active Particles, Volume 3},
  pages={153--200},
  year={2022},
  publisher={Springer}
}

@article{jin2020random,
  title={Random batch methods {(RBM)} for interacting particle systems},
  author={Jin, Shi and Li, Lei and Liu, Jian-Guo},
  journal={Journal of Computational Physics},
  volume={400},
  pages={108877},
  year={2020},
  publisher={Elsevier}
}

@book{hairer2006geometric,
  title={Geometric Numerical Integration},
  author={Ernst Hairer and Gerhard Wanner and Christian Lubich},
  year={2006},
  publisher={Springer}
}

@article{eisenmann2024randomized,
  title={A randomized operator splitting scheme inspired by stochastic optimization methods},
  author={Eisenmann, Monika and Stillfjord, Tony},
  journal={Numerische Mathematik},
  volume={156},
  number={2},
  pages={435--461},
  year={2024},
  publisher={Springer}
}

@article{praetorius2015navier,
  title={A {Navier-Stokes} phase-field crystal model for colloidal suspensions},
  author={Praetorius, Simon and Voigt, Axel},
  journal={The Journal of chemical physics},
  volume={142},
  number={15},
  year={2015},
  publisher={AIP Publishing}
}

@article{jacqmin1999calculation,
  title={Calculation of two-phase {Navier--Stokes} flows using phase-field modeling},
  author={Jacqmin, David},
  journal={Journal of computational physics},
  volume={155},
  number={1},
  pages={96--127},
  year={1999},
  publisher={Elsevier}
}

@article{li2022stability,
  title={Stability and convergence of Strang splitting. Part I: scalar Allen-Cahn equation},
  author={Li, Dong and Quan, Chaoyu and Xu, Jiao},
  journal={Journal of Computational Physics},
  volume={458},
  pages={111087},
  year={2022},
  publisher={Elsevier}
}

@article{li2022stability2,
  title={Stability and convergence of Strang splitting. Part II: tensorial Allen-Cahn equations},
  author={Li, Dong and Quan, Chaoyu and Xu, Jiao},
  journal={Journal of Computational Physics},
  volume={454},
  pages={110985},
  year={2022},
  publisher={Elsevier}
}

@article{descombes2001convergence,
  title={Convergence of a splitting method of high order for reaction-diffusion systems},
  author={Descombes, St{\'e}phane},
  journal={Mathematics of Computation},
  volume={70},
  number={236},
  pages={1481--1501},
  year={2001}
}

@article{strang1968construction,
  title={On the construction and comparison of difference schemes},
  author={Strang, Gilbert},
  journal={SIAM journal on numerical analysis},
  volume={5},
  number={3},
  pages={506--517},
  year={1968},
  publisher={SIAM}
}

@book{iserles2009first,
  title={A first course in the numerical analysis of differential equations},
  author={Iserles, Arieh},
  number={44},
  year={2009},
  publisher={Cambridge university press}
}

@book{glowinski2017splitting,
  title={Splitting methods in communication, imaging, science, and engineering},
  author={Glowinski, Roland and Osher, Stanley J and Yin, Wotao},
  year={2017},
  publisher={Springer}
}

@article{weng2016analysis,
  title={Analysis of the operator splitting scheme for the Allen--Cahn equation},
  author={Weng, Zhifeng and Tang, Longkun},
  journal={Numerical Heat Transfer, Part B: Fundamentals},
  volume={70},
  number={5},
  pages={472--483},
  year={2016},
  publisher={Taylor \& Francis}
}

@article{allen1979microscopic,
  title={A microscopic theory for antiphase boundary motion and its application to antiphase domain coarsening},
  author={Allen, Samuel M and Cahn, John W},
  journal={Acta metallurgica},
  volume={27},
  number={6},
  pages={1085--1095},
  year={1979},
  publisher={Elsevier}
}

@article{wheeler1992phase,
  title={Phase-field model for isothermal phase transitions in binary alloys},
  author={Wheeler, Adam A and Boettinger, William J and McFadden, Geoffrey B},
  journal={Physical Review A},
  volume={45},
  number={10},
  pages={7424},
  year={1992},
  publisher={APS}
}

@article{benevs2004geometrical,
  title={Geometrical image segmentation by the Allen--Cahn equation},
  author={Bene{\v{s}}, Michal and Chalupeck{\`y}, Vladimi{\'r} and Mikula, Karol},
  journal={Applied Numerical Mathematics},
  volume={51},
  number={2-3},
  pages={187--205},
  year={2004},
  publisher={Elsevier}
}

@article{feng2003numerical,
  title={Numerical analysis of the Allen-Cahn equation and approximation for mean curvature flows},
  author={Feng, Xiaobing and Prohl, Andreas},
  journal={Numerische Mathematik},
  volume={94},
  pages={33--65},
  year={2003},
  publisher={Springer}
}

@article{yue2004diffuse,
  title={A diffuse-interface method for simulating two-phase flows of complex fluids},
  author={Yue, Pengtao and Feng, James J and Liu, Chun and Shen, Jie},
  journal={Journal of Fluid Mechanics},
  volume={515},
  pages={293--317},
  year={2004},
  publisher={Cambridge University Press}
}

@article{anderson1998diffuse,
  title={Diffuse-interface methods in fluid mechanics},
  author={Anderson, Daniel M and McFadden, Geoffrey B and Wheeler, Adam A},
  journal={Annual review of fluid mechanics},
  volume={30},
  number={1},
  pages={139--165},
  year={1998},
  publisher={Annual Reviews 4139 El Camino Way, PO Box 10139, Palo Alto, CA 94303-0139, USA}
}

@article{halton1960efficiency,
  title={On the efficiency of certain quasi-random sequences of points in evaluating multi-dimensional integrals},
  author={Halton, John H},
  journal={Numerische Mathematik},
  volume={2},
  number={1},
  pages={84--90},
  year={1960},
  publisher={Springer}
}

@book{kuipers2012uniform,
  title={Uniform distribution of sequences},
  author={Kuipers, Lauwerens and Niederreiter, Harald},
  year={2012},
  publisher={Courier Corporation}
}

@article{li2025convergence,
  title={Convergence of Random Splitting Method for the Allen--Cahn Equation in a Background Flow},
  author={Li, Lei and Wang, Chen},
  journal={Numerical Methods for Partial Differential Equations},
  volume={41},
  number={6},
  pages={e70040},
  year={2025},
  publisher={Wiley Online Library}
}

@article{caflisch1998monte,
  title={Monte carlo and quasi-monte carlo methods},
  author={Caflisch, Russel E},
  journal={Acta numerica},
  volume={7},
  pages={1--49},
  year={1998},
  publisher={Cambridge University Press}
}

@article{li2025ergodicity,
  title={Ergodicity and error estimate of laws for a random splitting Langevin Monte Carlo},
  author={Li, Lei and Wang, Chen and Wang, Mengchao},
  journal={arXiv preprint arXiv:2510.07676},
  year={2025}
}

@article{cho2024doubling,
  title={Doubling the order of approximation via the randomized product formula},
  author={Cho, Chien-Hung and Berry, Dominic W and Hsieh, Min-Hsiu},
  journal={Physical Review A},
  volume={109},
  number={6},
  pages={062431},
  year={2024},
  publisher={APS}
}

@article{mclachlan2002splitting,
  title={Splitting methods},
  author={McLachlan, Robert I and Quispel, G Reinout W},
  journal={Acta Numerica},
  volume={11},
  pages={341--434},
  year={2002},
  publisher={Cambridge University Press}
}

@article{blanes2008splitting,
author = {Blanes, Sergio and Casas, Fernando and Murua, Ander},
year = {2009},
month = {01},
pages = {},
title = {Splitting and composition methods in the numerical integration of differential equations},
volume = {45},
journal = {Bol. Soc. Esp. Mat. Apl.}
}

@article{Blanes_Casas_Murua_2024,
title={Splitting methods for differential equations},
volume={33},
DOI={10.1017/S0962492923000077},
journal={Acta Numerica},
author={Blanes, Sergio and Casas, Fernando and Murua, Ander}, year={2024}, 
pages={1--161}}

@book{dick2010digital,
  title={Digital nets and sequences: discrepancy theory and quasi--Monte Carlo integration},
  author={Dick, Josef and Pillichshammer, Friedrich},
  year={2010},
  publisher={Cambridge University Press}
}

@article{dick2013high,
  title={High-dimensional integration: the quasi-Monte Carlo way},
  author={Dick, Josef and Kuo, Frances Y and Sloan, Ian H},
  journal={Acta Numerica},
  volume={22},
  pages={133--288},
  year={2013},
  publisher={Cambridge University Press}
}

@article{LI20101591,
title = {An unconditionally stable hybrid numerical method for solving the Allen--Cahn equation},
journal = {Computers \& Mathematics with Applications},
volume = {60},
number = {6},
pages = {1591-1606},
year = {2010},
issn = {0898-1221},
doi = {https://doi.org/10.1016/j.camwa.2010.06.041},
url = {https://www.sciencedirect.com/science/article/pii/S0898122110004554},
author = {Yibao Li and Hyun Geun Lee and Darae Jeong and Junseok Kim},
}

@article{10.1145/364520.364540,
author = {Durstenfeld, Richard},
title = {Algorithm 235: Random permutation},
year = {1964},
issue_date = {July 1964},
publisher = {Association for Computing Machinery},
address = {New York, NY, USA},
volume = {7},
number = {7},
issn = {0001-0782},
url = {https://doi.org/10.1145/364520.364540},
doi = {10.1145/364520.364540},
journal = {Commun. ACM},
month = jul,
pages = {420},
numpages = {2}
}

\appendix

\section{Explicit second-order expansions for the two-operator Allen--Cahn splitting}
\label{sec:appendix}

For convenience, we record here the explicit second-order expansions used in the proof of
Lemma~\ref{lem:pde:local_error}. In the current paper, we only consider the two-operator
splitting of the Allen--Cahn equation
\[
\partial_t u = Lu + R(u),
\qquad
Lu:=\nu\Delta u,
\qquad
R(u):=u-u^3 .
\]

Let $k\in\mathbb N$, $p\in[2,\infty]$, and assume that
\[
a\in W^{k+6,p}(\Omega)\cap W^{k+5,\infty}(\Omega).
\]
Then the one-step expansions of the two subflows are
\begin{align}
S_L(\tau)[a]
&=
a+\tau La+\frac{\tau^2}{2}L^2a+R_L(\tau,a) \notag\\
&=
a+\tau \nu\Delta a+\frac{\tau^2}{2}\nu^2\Delta^2 a+R_L(\tau,a), \label{eq:appendix:SL}
\\
S_R(\tau)[a]
&=
a+\tau R(a)+\frac{\tau^2}{2}DR(a)R(a)+R_R(\tau,a) \notag\\
&=
a+\tau(a-a^3)+\frac{\tau^2}{2}(1-3a^2)(a-a^3)+R_R(\tau,a). \label{eq:appendix:SR}
\end{align}
Moreover, the remainders satisfy
\begin{equation}\label{eq:appendix:remainder_basic}
\|R_L(\tau,a)\|_{k,p}+\|R_R(\tau,a)\|_{k,p}\le C\tau^3,
\end{equation}
where $C>0$ depends only on a bound of
\[
\|a\|_{k+6,p}+\|a\|_{k+5,\infty}.
\]

We now record the two Lie-type compositions
\[
S^+(\tau):=S_R(\tau)S_L(\tau),
\qquad
S^-(\tau):=S_L(\tau)S_R(\tau).
\]

For the ordering $S^+(\tau)=S_R(\tau)S_L(\tau)$, substituting \eqref{eq:appendix:SL} into
\eqref{eq:appendix:SR} gives
\begin{align}
S^+(\tau)[a]
&=
a+\tau(\nu\Delta a+a-a^3) \notag\\
&\quad
+\frac{\tau^2}{2}
\Big[
\nu^2\Delta^2 a
+2(1-3a^2)\nu\Delta a
+(1-3a^2)(a-a^3)
\Big]
+R_+(\tau,a). \label{eq:appendix:Splus}
\end{align}

For the ordering $S^-(\tau)=S_L(\tau)S_R(\tau)$, substituting \eqref{eq:appendix:SR} into
\eqref{eq:appendix:SL} yields
\begin{align}
S^-(\tau)[a]
&=
a+\tau(\nu\Delta a+a-a^3) \notag\\
&\quad
+\frac{\tau^2}{2}
\Big[
\nu^2\Delta^2 a
+2\nu\Delta(a-a^3)
+(1-3a^2)(a-a^3)
\Big]
+R_-(\tau,a). \label{eq:appendix:Sminus}
\end{align}

The exact flow admits the second-order expansion
\begin{align}
T(\tau)[a]
&=
a+\tau(\nu\Delta a+a-a^3) \notag\\
&\quad
+\frac{\tau^2}{2}
\Big[
\nu^2\Delta^2 a
+\nu\Delta(a-a^3)
+(1-3a^2)\nu\Delta a
+(1-3a^2)(a-a^3)
\Big]
+R_T(\tau,a). \label{eq:appendix:T}
\end{align}

Using the identity
\begin{equation}\label{eq:appendix:lap_nonlinear}
\Delta(a-a^3)
=
(1-3a^2)\Delta a-6a|\nabla a|^2,
\end{equation}
we can rewrite \eqref{eq:appendix:T} as
\begin{align}
T(\tau)[a]
&=
a+\tau(\nu\Delta a+a-a^3) \notag\\
&\quad
+\frac{\tau^2}{2}
\Big[
\nu^2\Delta^2 a
+2(1-3a^2)\nu\Delta a
+(1-3a^2)(a-a^3)
-6\nu a|\nabla a|^2
\Big]
+R_T(\tau,a). \label{eq:appendix:T_explicit}
\end{align}

The three composed remainders satisfy
\begin{equation}\label{eq:appendix:remainder_composed}
\|R_+(\tau,a)\|_{k,p}
+\|R_-(\tau,a)\|_{k,p}
+\|R_T(\tau,a)\|_{k,p}
\le C\tau^3.
\end{equation}

Therefore, comparing \eqref{eq:appendix:Splus} and \eqref{eq:appendix:T_explicit}, we obtain
\begin{equation}\label{eq:appendix:local_plus}
S^+(\tau)[a]-T(\tau)[a]
=
3\nu\tau^2 a|\nabla a|^2+\widetilde R_+(\tau,a),
\qquad
\|\widetilde R_+(\tau,a)\|_{k,p}\le C\tau^3.
\end{equation}
Similarly, comparing \eqref{eq:appendix:Sminus} and \eqref{eq:appendix:T_explicit}, we get
\begin{equation}\label{eq:appendix:local_minus}
S^-(\tau)[a]-T(\tau)[a]
=
-3\nu\tau^2 a|\nabla a|^2+\widetilde R_-(\tau,a),
\qquad
\|\widetilde R_-(\tau,a)\|_{k,p}\le C\tau^3.
\end{equation}

Equivalently, if one defines
\begin{equation}\label{eq:appendix:Phi}
\Phi(a):=DR(a)La-L(R(a))=6\nu a|\nabla a|^2,
\end{equation}
then \eqref{eq:appendix:local_plus}--\eqref{eq:appendix:local_minus} can be written in the compact form
\begin{equation}\label{eq:appendix:compact_local}
S^\pm(\tau)[a]-T(\tau)[a]
=
\pm \frac{\tau^2}{2}\Phi(a)+\widetilde R_\pm(\tau,a),
\qquad
\|\widetilde R_\pm(\tau,a)\|_{k,p}\le C\tau^3.
\end{equation}

\end{document}